\newtheorem{theorem}{Theorem}
\newtheorem{definition}{Definition}
\newtheorem{example}{Example}
\newtheorem{corollary}{Corollary}
\newtheorem{rem}{Remark}
\newtheorem{Procedure}{Procedure}
\newcommand{\QNUM}{n_\mathrm{p}}
\newcommand{\SWS}{\mathfrak{S}}
\newcommand{\rank}{\mathrm{rank}}
\newcommand{\NX}{n_\mathrm{x}}
\newcommand{\NY}{n_\mathrm{y}}
\newcommand{\NU}{n_\mathrm{u}}
\newcommand{\X}{\mathbb{R}^{n_\mathrm{x}}}
\newcommand{\TR}[1]{{\color{black}#1}}
\begin{document}
\title{Minimal realizations of input-output behaviors by LPV state-space representations with affine dependency}
\author{Mih\'aly Petreczky, Roland T\'oth, and Guillaume Merc\`{e}re
\thanks{Mih\'aly Petreczky is with Univ. Lille, CNRS, Centrale Lille, UMR 9189 CRIStAL, F-59000 Lille, France.{\tt\ mihaly.petreczky@centralelille.fr}, 
Guillaume Merc\`{e}re is with the University of Poitiers, Laboratoire d'Informatique et d'Automatique pour les Syst\`emes, 
{\tt guillaume.mercere@univ-poitiers.fr}, 
Roland T\'oth is with the Control Systems Group, Department of Electrical Engineering, Eindhoven University of Technology, Eindhoven, The Netherlands and the Systems and Control Laboratory, Institute for Computer Science and Control, 
Budapest, Hungary {\tt \small r.toth@tue.nl}}
}

\maketitle
\begin{abstract}
The paper makes the first steps towards a behavioral theory of LPV state-space representations with an affine dependency on scheduling, by characterizing minimality of such state-space representations. 
It is shown that minimality is equivalent to observability, and that minimal realizations of the same behavior are isomorphic.
Finally, we establish a formal relationship between minimality of LPV state-space representations with an affine dependence on scheduling
	and minimality of LPV state-space representations with a dynamic and meromorphic dependence on scheduling. 
\end{abstract}

\section{Introduction}

\emph{Linear parameter-varying} (LPV) \cite{Toth2010SpringerBook,Toth11_LPVBehav} 
systems represent a widely used system class which is more general than 
\emph{linear time-invariant} (LTI) systems and which can capture \emph{nonlinear} and \emph{time-varying} behavior.  
LPV systems are modeled by 
linear time-varying difference or differential equations, where the time varying coefficients are 
functions of a time-varying \emph{scheduling} signal.  
LPV systems are widely used in control (\!\!\cite{Rugh00,Packard94CL,Apkarian95TACT,Scherer2009}) and in system identification,  (\!\!\cite{Toth10_SRIV,Giarre2002,Butcher08,Poolla2008,Wingerden09,Sznaier:01,Verdult02}).

Despite these advances, there are still gaps in \textcolor{black}{theory of LPV systems, in particular in their \emph{realization theory}. Realization theory 
aims at characterizing the relationship between the input-output behavior and
certain classes of state-space representations (linear time-invariant, bilinear, etc.), see \cite{Isi:Nonlin,Son:MathContr}.}
Since realization theory is used in system identification and model reduction, \textcolor{black}{and} data-driven control \cite{RolandFundLemma}, filling this gap is important for further development
of those disciplines for LPV systems. 

\textcolor{black}{\textbf{Prior work on realization theory and motivation}}
\begin{color}{black}
Realization theory oi LPV-SS was first addressed
\cite{Toth2010SpringerBook,Toth11_LPVBehav}, where  behavioral theory was used to clarify realization theory, the concepts of minimality and equivalence
for so called LPV state-space representation (\emph{LPV-SS} for short) with the so called \emph{meromorphic} and \emph{dynamical dependency} of the model parameters on the scheduling variable. We will refer to the latter class of LPV-SSs as \emph{meromorphic LPV-SS}.
Recall from \cite{Toth2010SpringerBook,Toth11_LPVBehav} that meromorphic and dynamic dependency means that the entries of the system matrices are  fractions of analytic functions of  the current value and past values of the scheduling variables (in discrete-time), or the high-order 
derivatives of the scheduling variables (in continuous-time).
A major drawback of meromorphic LPV-SSs is that for practical applications it is often preferable to use LPV-SSs with a static and
affine dependency on the scheduling variable (\emph{LPV-SSA} for short), i.e., LPV-SS whose system parameters are affine functions of the instantaneous value of the scheduling variable.
However, whereas LPV-SSAs are a subclass of meromorphic LPV-SSs, the 
system-theoretic transformations (passing from input-output to state-space representation, transforming a state-space representation to a minimal one, etc.)  of \cite{Toth2010SpringerBook,Toth11_LPVBehav} result in meromorphic LPV-SSs, even if applied to LPV-SSAs, 
see \cite{CoxThesis}. 

  In \cite{BelikovSCL2014}, realizability of LPV input-output equations by LPV-SSs 
  with a general (non affine) dependence on the scheduling variable was investigated. However, it is not  clear that all behaviors of interest
  admit the LPV input-output 
  representations from \cite{BelikovSCL2014}, and \cite{BelikovSCL2014} does not address
  minimality.

In \cite{LPVSSAkalman} Kalman-style realization theory for LPV-SSAs was developed. 
A drawback of \cite{LPVSSAkalman} 
lies in the use of input-output functions, which captures the input-output behavior only from a certain fixed initial state. In contrast, for control synthesis,  the initial state is not fixed.  

That is, \cite{Toth2010SpringerBook,Toth11_LPVBehav,BelikovSCL2014} do not address behavioral realization theory for LPV-SSAs.



\textbf{Contribution}
\end{color}
In this paper we make a first step towards a behavioral approach for LPV-SSAs. \textcolor{black}{Similarly to \cite{Toth2010SpringerBook}, we use the
concept of manifest behavior from \cite{PW91}
to formalize the input-output behavior of LPV-SSAs}. 
\textcolor{black}{Then, under suitable assumptions,  
the following counterparts of the well-known results for LTI behaviors \cite{PW91} hold}:
\begin{itemize}
\item
  A LPV-SSA is a minimal realization of a given behavior if and only if it is observable, and all
		minimal realization of the same manifest behavior are related by a  linear (constant) isomorphism. 
 \item
    A behavior is controllable, if and only if its minimal LPV-SSAs is span-reachable
    from the zero initial state. 
\end{itemize}
We also formulate a \textcolor{black}{computionally effective} minimization procedure for LPV-SSAs. 
Furthermore, we show that \textcolor{black}{under some assumptions}, a minimal 
LPV-SSA realizatio of a behavior is also minimal   
if viewed as a meromorphic LPV-SS \cite{Toth2010SpringerBook}.
\textcolor{black}{The latter is interesting, 
as in contrast to meromorphic LPV-SSs, there are 
computationally effective algorithms for minimization and
checking minimality of LPV-SSAs. }

\textbf{Outline}
 In Section \ref{sec:pre}, we present the necessary background on LPV-SSA representations and then formalize several system theoretic concepts.
 In Section \ref{para:main}, the main results are introduced, Section \ref{sect:proof_main} gathers the proofs. 
\section{Preliminaries} \label{sec:pre}
\begin{color}{black}
Let $\mathbb{T}=\mathbb{R}_{0}^{+}=[0,+\infty)$ be the time axis in the  \emph{continuous-time} (CT) case and $\mathbb{T}=\mathbb{N}$ in the  \emph{discrete-time} (DT) case.  Let $\xi$ be the differentiation operator $\frac{d}{dt}$ (in CT) and the forward time-shift
operator $q$ (in DT), \emph{i.e.},
if $z: \mathbb{T} \rightarrow \mathbb{R}^{n}$,  then
$(\xi z)(t)=\frac{d}{dt} z(t)$, if  CT, and 
$(\xi z)(t)=z(t+1)$, if  DT. 
\end{color}

Define a LPV state-space representations with \emph{affine} dependence on the \emph{scheduling variable} (\emph{LPV-SSA}) as
\begin{equation}\label{equ:alpvss}
  \Sigma \ \left\{
  \begin{array}{lcl}
   \xi x (t) &=& A(p(t)) x(t) + B(p(t)) u(t), \\
   y(t)  &=& C(p(t)) x(t) + D(p(t))u(t),
  \end{array}\right.
\end{equation}
where \textcolor{black}{$x: \mathbb{T} \rightarrow  \mathbb{R}^{n_\mathrm{x}}$ is the state trajectory, $y:\mathbb{T} \rightarrow \mathbb{R}^{n_\mathrm{y}}$ is the (measured) output trajectory, $u: \mathbb{T} \rightarrow \mathbb{R}^{n_\mathrm{u}}$ is the (control) input signal and $p:\mathbb{T} \rightarrow  \mathbb{P}\subseteq \mathbb{R}^{n_\mathrm{p}}$ is the so called \emph{scheduling signal} of the system represented by $\Sigma$.  }
\begin{color}{black}
Moreover, $A,B,C,D$ are matrix valued affine functions defined on $\mathbb{P}$,
i.e. there exists matrices $A_i \in \mathbb{R}^{\NX \times \NX}$,
$B_i \in \mathbb{R}^{\NX \times \NU}$, $C_i \in \mathbb{R}^{\NY \times \NX}$ and
$D_i \in \mathbb{R}^{\NY \times \NU}$ for all $i=0,1,\ldots,n_{\mathrm p}$, such that
\begin{equation*}
\begin{split} 
  A(\mathbf{p}) = A_0 + \sum_{i=1}^{\QNUM} A_i \mathbf{p}_i, ~
 B(\mathbf{p}) = B_0 + \sum_{i=1}^{\QNUM} B_i \mathbf{p}_i \\
 C(\mathbf{p}) = C_0 + \sum_{i=1}^{\QNUM} C_i \mathbf{p}_i, ~ 
 D(\mathbf{p}) = D_0 + \sum_{i=1}^{\QNUM} D_i \mathbf{p}_i
\end{split}
\end{equation*}
for every $\mathbf{p}=[\begin{array}{ccc} \mathbf{p}_1& \ldots & \mathbf{p}_{\QNUM}\end{array}]^\top \in \mathbb{P}$
\footnote{Note that in the sequel we use italic letters to denote scheduling signals, and boldface letters to denote elements of $\mathbb{P}$.}.
\end{color}

\begin{color}{black}
Note that in LPV systems, 
the input and scheduling signals play the
role of exogenous inputs.
Note that it is often assumed that the scheduling signals are bounded to ensure desirable properties, e.g., stability,
hence in general
$\mathbb{P} \ne \mathbb{R}^{n_{\mathrm p}}$.

\end{color}
In the sequel, we use the shorthand notation 
\begin{equation*}
   \Sigma=(\mathbb{P},\left\{ A_i, B_i, C_i, D_i \right\}_{i=0}^{\QNUM})
\end{equation*}
to denote a LPV-SSA of the form \eqref{equ:alpvss} and 
use  $\dim{( \Sigma )}=n_\mathrm{x}$ to denote its state dimension. 

\begin{color}{black}
For the purposes of this paper we need to formalize the solution concept for LPV-SSAs.  
To this end, we define the sets $\mathcal{X},\mathcal{Y},\mathcal{U},\mathcal{P}$ of  respectively state and output trajectories,  and input and  scheduling signals as follows. 
For a set $X$ let $X^{\mathbb{N}}$ denote the set of all functions of the form $f:\mathbb{N} \rightarrow X$. 
In DT, let $\mathcal{X}=(\mathbb{R}^{\NX})^\mathbb{N}, \mathcal{Y}=(\mathbb{R}^{\NY})^\mathbb{N}, \mathcal{U}=(\mathbb{R}^{\NU})^{\mathbb{N}},\mathcal{P}=\mathbb{P}^{\mathbb{N}}$. In CT, let us denote by $\mathcal{C}_\mathrm{p}(\mathbb{R}_{0}^{+},\mathbb{R}^n)$ the set of all functions of the form $f:\mathbb{R}_{0}^{+} \rightarrow \mathbb{R}^{n}$ which are
piecewise-continuous. In addition let
$\mathcal{C}_\mathrm{a}(\mathbb{R}_{0}^{+},\mathbb{R}^n)$ 
be the set of all absolutely continuous functions of the form $f:\mathbb{R}_{0}^{+} \rightarrow \mathbb{R}^{n}$.
Then, in CT let 
$\mathcal{X}=\mathcal{C}_\mathrm{a}(\mathbb{R}_{0}^{+},\mathbb{R}^{\NX}), \mathcal{Y}=\mathcal{C}_\mathrm{p}(\mathbb{R}_{0}^{+},\mathbb{R}^{\NY}), \mathcal{U}=\mathcal{C}_\mathrm{p}(\mathbb{R}_{0}^{+},\mathbb{R}^{\NU}), \mathcal{P}=\mathcal{C}_\mathrm{p}(\mathbb{R}_{0}^{+},\mathbb{P})$.
\end{color}

 By a \textcolor{black}{solution}  of $\Sigma$ we mean a tuple of trajectories $(x,y,u,p)\in(\mathcal{X},\mathcal{Y},\mathcal{U},\mathcal{P})$ satisfying \eqref{equ:alpvss} for almost all $t \in \mathbb{T}$ in CT case, and for all $t \in \mathbb{T}$ in DT. 

Notice that without loss of generality, the solution trajectories in CT can be considered on the half line $\mathbb{R}_0^+$ with
$t_\mathrm{o}=0$. 
Note that for any input and scheduling signal  $(u,p)\in\mathcal{U}\times \mathcal{P}$ and any initial state \textcolor{black}{$x_{\mathrm o} \in \mathbb{R}^{\NX}$}, there exists a \emph{unique} pair $(y,x)\in\mathcal{Y}\times \mathcal{X}$ such that $(x,y,u,p)$ is a solution of  \eqref{equ:alpvss} and $x(0)=x_{\mathrm o}$,
see \cite{Toth2010SpringerBook}.
Next,  inspired by \cite{PW91,Toth2010SpringerBook}, we define the notion of manifest behavior for LPV-SSAs.
\begin{definition}
 A \textcolor{black}{\emph{manifest behavior}} is a subset $\mathcal{B} \subseteq \mathcal{Y} \times \mathcal{U} \times \mathcal{P}$.
	The \textcolor{black}{\emph{manifest behavior} $\mathcal{B}(\Sigma)$ of a LPV-SSA $\Sigma$} is defined as
 \begin{equation*} 
 \begin{split}
 \textcolor{black}{\mathcal{B}}(\Sigma)=\bigl\{(y,u,p) \in \mathcal{Y} \times \mathcal{U} \times \mathcal{P} \mid \exists x \in \mathcal{X}  \\
 \mbox{ s.t. } (x,y,u,p) \mbox{ is a solution of \eqref{equ:alpvss}}
    \bigr\}
\end{split}
 \end{equation*}
  
	The LPV-SSA  $\Sigma$ is a \emph{realization} of a manifest behavior  \textcolor{black}{$\mathcal{B}\subseteq \mathcal{Y}\times \mathcal{U} \times \mathcal{P}$}, 
 if \textcolor{black}{$\mathcal{B}=\mathcal{B}(\Sigma)$}.
\end{definition}
That is, the manifest behavior of a LPV-SSA $\Sigma$ is the set of all tuples $(y,u,p)$ such that $\Sigma$ generates the output $y$ for some initial state, if $\Sigma$ is fed input $u$ and scheduling $p$. 
The corresponding definition of minimality is then as follows. 
\begin{definition}
	A LPV-SSA $\Sigma$ is a \emph{minimal realization} of a manifest behavior 
	$\mathcal{B}$, if it is a realization of $\mathcal{B}$, and for any  LPV-SSA $\Sigma^{'}$ such that $\Sigma^{'}$ is a realization of $\mathcal{B}$, $\dim \Sigma \le \dim \Sigma^{'}$. 
	We say that $\Sigma$ is \emph{minimal}, if $\Sigma$ is a minimal realization of its own manifest behavior $\mathcal{B}(\Sigma)$. 
\end{definition}
Manifest behaviors are a natural formalization of the intuition behind input-output behaviors of LPV-SSAs. 
However, input-output behaviors can also be formalized  using \textcolor{black}{input-output functions}. 
The latter was used 
in \cite{LPVSSAkalman}  for proposing a Kalman-style realization theory for
LPV-SSAs. The principal definitions are as follows:
\begin{definition}
	Let  $x_\mathrm{o} \in \mathbb{X}$ be an initial state of $\Sigma$. Define the \textcolor{black}{\emph{input-output (i/o) function}
$\mathfrak{Y}_{\Sigma,x_\mathrm{o}}:   \mathcal{U} \times \mathcal{P}  \rightarrow \mathcal{Y}$},
induced by the initial state $x_{\mathrm o}$, as follows
\textcolor{black}{: for any $(u,p) \in \mathcal{U} \times \mathcal{P}$,
$y=\mathfrak{Y}_{\Sigma,x_\mathrm{o}}(u,p)$ holds if and only if there exists a solution $(x,y,u,p)$ of  \eqref{equ:alpvss} 
such that $x(0)=x_{\mathrm o}$}.
\end{definition}
\begin{definition}
	A LPVS-SSA $\Sigma$ is a \emph{realization} of
	an i/o function $\mathfrak{F} : \mathcal{U} \times \mathcal{P} \rightarrow \mathcal{Y}$ 
	from the initial state $x_{\mathrm o} \in \mathbb{X}$, if \textcolor{black}{$\mathfrak{F}$ coincides with the i/o function of $\Sigma$ induced by $x_{\mathrm o}$, i.e. }
	$\mathfrak{F}=\mathfrak{Y}_{\Sigma,x_{\mathrm o}}$.
     \textcolor{black}{We say $\Sigma$ is a \emph{realization} of $\mathfrak{F}$, if it is a 
	realization of $\mathfrak{F}$ from some initial state.}
 
	\textcolor{black}{The LPV-SSA $\Sigma$ is a \emph{minimal realization of $\mathfrak{F}$} 
   if it is a realization of $\mathfrak{F}$,
    and for every LPV-SSA $\Sigma^{\prime}$  which is a realization of $\mathfrak{F}$, $\dim{ ( \Sigma ) } \leq \dim{ ( \Sigma^\prime ) }$.}
\end{definition}
\begin{color}{black}
An drawback of using i/o functions instead of manifest behaviors is that the former capture the input-output behavior for one choice of initial states.  However, we can account for all initial states   by using families of i/o functions: 
\begin{definition}
	If $\Sigma$ is a LPV-SSA with the state-space $\mathbb{R}^{\NX}$, then the set $\mathbb{F}(\Sigma)=\{\mathcal{Y}_{\Sigma,x_{\mathrm o}} \mid x_{\mathrm o} \in \mathbb{R}^{\NX}\}$ of all i/o functions of $\Sigma$
    induced by some initial state of $\Sigma$ 
    is called the \emph{family of i/o functions}  of $\Sigma$.
	A family $\Phi$ of i/o functions of the form $\mathfrak{F}:\mathcal{U} \times \mathcal{P} \rightarrow \mathcal{Y}$ is
	\emph{realized} by $\Sigma$, if $\Phi=\mathbb{F}(\Sigma)$.
\end{definition}
It is natural to ask if using families of i/o functions are  equivalent to using LPV manifest behaviors.
Clearly, if $\mathbb{F}(\Sigma)=\mathbb{F}(\hat{\Sigma})$, then $\mathcal{B}(\Sigma)=\mathcal{B}(\hat{\Sigma})$ holds. 
\end{color}
In fact, the example below shows that the converse is not true.
\begin{example}
\label{rem:iobehav}
	Consider the LPV-SSAs $\Sigma$ and $\Sigma^{'}$ 
	\[ 
         \begin{split} 
		 &	 \Sigma \left \{ \begin{array}{l}   x_1(t+1)=x_1(t)+p(t)x_2(t), ~ x_2(t+1)=0, \\
 	         y(t)=x_1(t)+p(t)x_2(t),  \end{array}\right. \\
	  & \Sigma^{'} \left \{ \begin{array}{l}  z(t+1)=0, ~ y(t)=z(t),  \end{array}\right. 	 
	 \end{split}	  
	\]	      
	with the scheduling space $\mathbb{P}=\mathbb{R}$. 
 A straightforward calculation reveals that
 $\mathcal{B}(\Sigma^{'})=\mathcal{B}(\Sigma)$.
Indeed,	note that $(x,y,u,p)$ is a solution of $\Sigma$, if and only if $x_1(t)=x_1(0)+p(0)x_2(0)$ and $x_2(t)=0$ for $t > 0$, hence
	$y(t)= x_1(0)+p(0)x_2(0)$ for all $t \ge 0$. Therefore, $\mathcal{B}(\Sigma)=\{ (y,u,p) \mid y(t) \mbox{ is constant} \}$. 
	It then follows that $\mathcal{B}(\Sigma^{'})=\mathcal{B}(\Sigma)$. 

 However,  for $x_{\mathrm o}=\begin{bmatrix} 1 & 1 \end{bmatrix}^{\top}$ and $x_{\mathrm o}^{'}=1$,
 $\mathfrak{Y}_{\Sigma,x_{\mathrm o}} \ne \mathfrak{Y}_ {\Sigma^{'},x_{\mathrm o}^{'}}$. To see this,
 it is enough to evaluate the i/o functions 
 involved for any two scheduling signals $p_1$ and $p_2$
 such that $p_1(0)=0$ and $p_1(1)=1$.
 Hence, $\mathbb{F}(\Sigma) \ne \mathbb{F}(\Sigma^{'})$.
 Indeed,  $\mathfrak{Y}_{\Sigma,x_{\mathrm o}}(0,p)=1=\mathfrak{Y}_ {\Sigma^{'},x_{\mathrm o}^{'}}(0,p)=x_{\mathrm o}^{'}$. However, if $p(0)=1$, then 
	$\mathfrak{Y}_{\Sigma,x_{\mathrm o}}(0,p)=2 \ne 1=\mathfrak{Y}_{\Sigma^{'},x_{\mathrm o}^{'}}(0,p)$.

%
\end{example}
\begin{color}{black}
That is, realization theory of manifest behaviors is not equivalent to that of i/o functions.
In particular, the minimality results of \cite{LPVSSAkalman}
do not apply in the behavioral setting. 
It is then a natural to ask if similarly to \cite{LPVSSAkalman}, observability and span-reachability to characterize minimality in the behaviroal setting.
\end{color}
	The latter are recalled below.
\begin{definition}
\label{def:reachobs}
	Let $\Sigma$ be a LPV-SSA of the form~\eqref{equ:alpvss}.  $\Sigma$ is \emph{span-reachable} from an initial state $x_\mathrm{o} \in \X$, \textcolor{black}{if  the linear  span of all states reachable from
 $x_{\mathrm o}$ equals the whole state-space $\mathbb{R}^{\NX}$, i.e.
	$\mathrm{Span}\{ x(t) \mid (x,y,u,p) \in \mathcal{X} \times \mathcal{Y} \times \mathcal{U} \times \mathcal{P}, (x,y,u,p) \mbox{ is a solution of \eqref{equ:alpvss}}, t \in \mathbb{T}, x(0)=x_{\mathrm o} \}=\mathbb{R}^{\NX}$}.
 
 The LPV-SSA $\Sigma$ is \emph{observable} if any two \textcolor{black}{distinct initial states}  \textcolor{black}{induce
 distinct i/o functions, i.e.  
 $\forall ~ x_1,x_2 \in \mathbb{R}^{\NX}$: $x_1 \ne x_2 \implies \mathfrak{Y}_{\Sigma,x_1} \ne  \mathfrak{Y}_{\Sigma,x_2}$}.
\end{definition}
 Observability and span-reachability can be characterized by rank conditions 
 \cite{LPVSSAkalman}.
  Finally, similarly to \cite{LPVSSAkalman }, we  \textcolor{black}{we would like minimal realizations of the same manifest behavior to be isomorphic. The latter  notion is  defined below} 
  \begin{definition}
 \label{isomorphism}
   \textcolor{black}{Let $\Sigma$ be of the form \eqref{equ:alpvss} and let 
   and let
	  $\Sigma^\prime=(\mathbb{P}, \{ A_i^{'}, B_i^{'}, C_i^{'}, D_i^{'} \}_{i=0}^{\QNUM})$ be a LPV-SSA} with $\dim(\Sigma)=\dim(\Sigma^\prime)=n_\mathrm{x}$. 
	  A nonsingular matrix $T \in \mathbb{R}^{n_\mathrm{x} \times n_\mathrm{x}}$ \textcolor{black}{is \emph{isomorphism}} from $\Sigma$ to $\Sigma^\prime$, if
 \[ A^\prime_{i} T= T A_i ~~ B^\prime_{i}= T B_i ~~  C^\prime_{i}T= C_i ~~   D^\prime_i=D_i, ~ i=0,\ldots, n_{\mathrm p} \]
  \end{definition}
  \textcolor{black}{Note that the matrix $T$ in the definition above does not depend on the scheduling signal, and it acts only on the states of the LPV-SSAs involved. In particular, the LPV-SSAs $\Sigma$ and $\Sigma^{\prime}$ have the same inputs and outputs and are defined over the same set of scheduling signals.}
\begin{color}{black}

\textbf{Problem formulation:} in this paper we will address the following questions: 
 
 \textbf{(1)}
  If two LPV-SSAs have the same manifest behavior,  do they have the same set of i/o functions ?
    
\textbf{(2)}
  Can we characterize minimal LPV-SSAsin terms of observability and span-reachability ?  
  
\textbf{(3)}
  Are minimal LPV-SSA realizations of the same manifest LPV behavior isomorphic ? 
  
\textbf{(4)}
    Is there an algorithm for transofrming an LPV-SSA to a 
     minimal LPV-SSA realization of its manifest behavior. 

 \textbf{(5)}
   Are minimal LPV-SSAs also minimal
   as meromorphic LPV-SS from \cite{Toth2010SpringerBook,Toth11_LPVBehav} ?

\end{color}

\section{Main results}
\label{para:main}
 In this section, we present the main results of the paper, which answer the questions formulated above.

\subsection{Input-output functions vs. behaviors}
\label{sec:iofunbehav}
 We start by clarifying the relationship between  manifest behaviors and  i/o functions of LPV-SSAs. 
 \textcolor{black}{To this end, we need the following definition.}
 \begin{definition}
	A LPV-SSA of the form \eqref{equ:alpvss} is said to satisfy the \emph{regularity certificate (RC)} if
    \begin{color}{black}
		 \textbf{(1)} $\mathbb{P}$ is convex with non-empty interior, and, in addition,
		 \textbf{(2)}
   in the DT case, the matrix $A(\mathbf{p})$ is invertible for all $\mathbf{p} \in \mathbb{P}$.
   \end{color}
\end{definition}
\textcolor{black}{In CT, the satisfaction of RC depends only on $\mathbb{P}$, and it is satisfied if $\mathbb{P}$ is a Cartesian product of intervals, e.g.,
$\mathbb{P}=[a,b]^{n_{\mathrm p}}$, $a < b$.
In DT, the RC condition is more restrictive.}
 

\begin{theorem}
\label{col:behavior}
    \textcolor{black}{Let $\Sigma$ and $\hat{\Sigma}$ be two LPV-SSAs which satisfy RC.
 Then $\Sigma$ and $\hat{\Sigma}$ have the same family of i/o functions,i.e. $\mathbb{F}(\Sigma)=\mathbb{F}(\hat{\Sigma})$, if and only if their manifest behavior is the same, i.e., $\mathcal{B}(\Sigma_1)=\mathcal{B}(\Sigma_2)$.}
\end{theorem}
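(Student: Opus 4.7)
The forward direction is immediate: for every LPV-SSA $\Sigma$ one has
\[
\mathcal{B}(\Sigma) \;=\; \bigcup_{\mathfrak{F}\in\mathbb{F}(\Sigma)} \bigl\{(\mathfrak{F}(u,p),u,p) \mid (u,p)\in\mathcal{U}\times\mathcal{P}\bigr\},
\]
so equal families of i/o functions yield equal manifest behaviors. (This is the implication already observed just above Example~\ref{rem:iobehav}.)

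For the converse, assume $\mathcal{B}(\Sigma)=\mathcal{B}(\hat{\Sigma})$; by symmetry it suffices to prove $\mathbb{F}(\Sigma)\subseteq\mathbb{F}(\hat{\Sigma})$. Fix $x_{\mathrm o}\in\mathbb{R}^{\NX}$ and set $\mathfrak{F}:=\mathfrak{Y}_{\Sigma,x_{\mathrm o}}$. My plan is, first, to select a reference pair $(u^{\ast},p^{\ast})$ with $u^{\ast}\equiv 0$ and $p^{\ast}\equiv\mathbf{p}^{\ast}$ constant at some $\mathbf{p}^{\ast}$ in the interior of $\mathbb{P}$ (available by the first clause of RC). Since $(\mathfrak{F}(u^{\ast},p^{\ast}),u^{\ast},p^{\ast})\in\mathcal{B}(\hat{\Sigma})$, pick $\hat{x}_{\mathrm o}$ with $\mathfrak{Y}_{\hat{\Sigma},\hat{x}_{\mathrm o}}(u^{\ast},p^{\ast})=\mathfrak{F}(u^{\ast},p^{\ast})$. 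The remaining task is to upgrade this single-point agreement into global agreement $\mathfrak{Y}_{\hat{\Sigma},\hat{x}_{\mathrm o}}(u,p)=\mathfrak{F}(u,p)$ on every $(u,p)\in\mathcal{U}\times\mathcal{P}$.

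The upgrade relies on superposition. For any $(u,p)$ pick, again by behavioral equivalence, an auxiliary $\hat{x}_{\mathrm o}'(u,p)$ with $\mathfrak{Y}_{\hat{\Sigma},\hat{x}_{\mathrm o}'(u,p)}(u,p)=\mathfrak{F}(u,p)$. Then the defect
\[
\delta(u,p) \;:=\; \mathfrak{Y}_{\hat{\Sigma},\hat{x}_{\mathrm o}}(u,p)-\mathfrak{F}(u,p) \;=\; \mathfrak{Y}_{\hat{\Sigma},\hat{x}_{\mathrm o}-\hat{x}_{\mathrm o}'(u,p)}(0,p)
\]
is a pure zero-input response of $\hat{\Sigma}$, and the claim is $\delta\equiv 0$. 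I would first handle the case when $(u,p)$ is piecewise-constant and coincides with $(u^{\ast},p^{\ast})$ on an initial segment $[0,T)$ (such trajectories are admissible in DT, and in CT they lie in $\mathcal{C}_{\mathrm p}(\mathbb{R}_{0}^{+},\mathbb{P})$ because $\mathbb{P}$ is convex), propagating the match forward from the common state reached at time $T$; then pass to general $(u,p)$ by a density/continuity argument using that $\mathfrak{Y}_{\Sigma,x_{\mathrm o}}(u,p)(t)$ depends continuously on $(u,p)$ at each fixed time (polynomially in DT, analytically in CT).

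The main obstacle is this extension step, and it is precisely where RC must genuinely be used: without it, behavioral equivalence does not imply equivalence of i/o function families, as Example~\ref{rem:iobehav} shows. The two clauses of RC play asymmetric roles. In DT, invertibility of $A(\mathbf{p})$ prevents the finite-time collapse of the natural-response image that Example~\ref{rem:iobehav} exploits (its $A$ is singular), so that states remain distinguishable through all times. In CT, the convex interior of $\mathbb{P}$ supplies enough piecewise-constant schedulings for the approximation step and is, in fact, the only content of RC in continuous time. Making both branches of the extension rigorous — and doing so uniformly — is the delicate part.
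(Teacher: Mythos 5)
Your forward direction and your use of superposition to reduce everything to zero-input responses match the paper. The converse, however, has a genuine gap, and it sits exactly where you locate "the delicate part": your reference pair $(u^{\ast},p^{\ast})$ with $p^{\ast}$ \emph{constant} cannot do the job. Matching $\mathfrak{Y}_{\Sigma,x_{\mathrm o}}$ and $\mathfrak{Y}_{\hat{\Sigma},\hat{x}_{\mathrm o}}$ along a single constant scheduling only pins $\hat{x}_{\mathrm o}$ down modulo the unobservable subspace of the \emph{frozen LTI} system $\hat{\Sigma}(\mathbf{p}^{\ast})$, which is in general strictly larger than the set of states indistinguishable in the LPV sense; so the $\hat{x}_{\mathrm o}$ you select need not reproduce $\mathfrak{F}$ for other schedulings. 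Your proposed upgrade does not repair this: the identity $\delta(u,p)=\mathfrak{Y}_{\hat{\Sigma},\hat{x}_{\mathrm o}-\hat{x}_{\mathrm o}'(u,p)}(0,p)$ is correct, but showing it vanishes amounts to showing $\hat{x}_{\mathrm o}$ and $\hat{x}_{\mathrm o}'(u,p)$ are indistinguishable in $\hat{\Sigma}$, which is the very thing to be proved; and since $\hat{x}_{\mathrm o}'(u,p)$ changes with $(u,p)$, there is no uniformity for a density/continuity argument to exploit. Continuity of $(u,p)\mapsto\mathfrak{Y}(u,p)(t)$ at fixed $t$ also does not let you pass from piecewise-constant to general schedulings here, because the quantity you are trying to control is an equality of whole functions selected by an existential quantifier over initial states.

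The missing idea is the existence of a single \emph{universal} (observability-revealing) scheduling signal $p_{\mathrm o}$ and horizon $t_{\mathrm o}$ such that agreement of the zero-input responses on $[0,t_{\mathrm o}]$ under $p_{\mathrm o}$ forces agreement under \emph{every} scheduling signal (Theorem \ref{min:compare:col1} and Corollary \ref{univ:input}). The paper obtains $p_{\mathrm o}$ by viewing the autonomous part of an observability-reduced error system built from $\Sigma$ and $\hat{\Sigma}$ as a bilinear system with the scheduling as input, and invoking the universal-input theorems of Sussmann (CT) and Wang--Sontag (DT); this is precisely where the two clauses of RC enter (convexity with nonempty interior of $\mathbb{P}$ for the hypotheses of those theorems, invertibility of $A(\mathbf{p})$ in DT), not merely to "prevent finite-time collapse" as you suggest. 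With $p_{\mathrm o}$ in hand, the paper also needs a separate concatenation argument (zero input and $p_{\mathrm o}$ on an initial window, then a shifted copy of an arbitrary $(u,p)$) to establish $\mathfrak{Y}_{\Sigma,0}=\mathfrak{Y}_{\hat{\Sigma},0}$ before the superposition step you rely on can be closed. Without a universal scheduling signal, or some equivalent device, your argument does not go through.
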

The proof of Theorem \ref{col:behavior} is presented in Section \ref{sect:proof_main}.
The theorem above says that
\textcolor{black}{manifest behaviors and families of i/o function are equivalent formalizations of input-output behaviors of LPV-SSA satisfying RC. }
\textcolor{black}{Theorem \ref{col:behavior} is no longer
true if we drop RC, see Example \ref{rem:iobehav}.}

\subsection{Minimality}
\label{sect:minim}
 Theorem \ref{col:behavior} and an extension of the  results of \cite{LPVSSAkalman} to families of i/o functions
 leads to the following characterization of minimal realizations of manifest behaviors.
\begin{theorem}
\label{the:behavior_min1}
	\textcolor{black}{A LPV-SSA which satisfies RC 
	is minimal, if and only if it is observable.}
	Furthermore, \textcolor{black}{any two minimal LPV-SSAs which satisfy RC and which are realizations of the same manifest behavior are isomorphic.}
\end{theorem}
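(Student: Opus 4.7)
The plan is to reduce the behavioral minimality question to the Kalman-style i/o-function realization theory of \cite{LPVSSAkalman}, using Theorem \ref{col:behavior} as the bridge between manifest behaviors and families of i/o functions whenever RC is in force.

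Necessity (minimal implies observable) is the easy half and does not essentially use RC. Arguing contrapositively, suppose $\Sigma$ is not observable, and define the unobservable subspace $\mathcal{O} \subseteq \mathbb{R}^{\NX}$ as the linear span of all differences $x_1 - x_2$ with $\mathfrak{Y}_{\Sigma,x_1} = \mathfrak{Y}_{\Sigma,x_2}$. Standard manipulations, along the lines of the rank characterisations of \cite{LPVSSAkalman}, show that $\mathcal{O}$ is $A_i$-invariant and contained in $\ker C_i$ for every $i=0,\dots,\QNUM$, so the quotient $\Sigma/\mathcal{O}$ is a well-defined LPV-SSA over the same scheduling space $\mathbb{P}$ with strictly smaller state dimension. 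A direct computation yields $\mathbb{F}(\Sigma/\mathcal{O}) = \mathbb{F}(\Sigma)$, whence $\mathcal{B}(\Sigma/\mathcal{O}) = \mathcal{B}(\Sigma)$ by the easy direction of Theorem \ref{col:behavior} (which was already noted in the text to hold unconditionally), contradicting minimality.

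For sufficiency and the isomorphism claim, the crucial step is to prove a lemma extending \cite{LPVSSAkalman} from single i/o functions to families: \emph{any two observable LPV-SSAs realizing the same family $\mathbb{F}$ of i/o functions are related by a constant isomorphism in the sense of Definition \ref{isomorphism}; in particular they have equal state dimension.} I would establish this along the classical Kalman template. The family $\mathbb{F}$ induces a Nerode-type equivalence between the state spaces of any two observable realizations $\Sigma, \Sigma'$ via $x \sim x' \Leftrightarrow \mathfrak{Y}_{\Sigma,x} = \mathfrak{Y}_{\Sigma',x'}$; observability of both sides makes this the graph of a linear bijection $T$. Substituting $x'(t) = T x(t)$ into the defining equations \eqref{equ:alpvss} of the two systems and varying $(u,p)$ and the instantaneous value $\mathbf{p} \in \mathbb{P}$ (whose convex, non-empty interior lets one separate the affine coefficients in the expansions of $A,B,C,D$) pins down the intertwining identities $A'_i T = T A_i$, $B'_i = T B_i$, $C'_i T = C_i$, $D'_i = D_i$ for all $i$. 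Given this lemma, sufficiency is immediate: if $\Sigma'$ realizes $\mathcal{B}(\Sigma)$, pass to its observable quotient $\Sigma'/\mathcal{O}'$; apply Theorem \ref{col:behavior} to $\Sigma$ and $\Sigma'/\mathcal{O}'$ (once RC is verified on the quotient) to conclude $\mathbb{F}(\Sigma'/\mathcal{O}') = \mathbb{F}(\Sigma)$; invoke the lemma to obtain $\dim \Sigma = \dim(\Sigma'/\mathcal{O}') \le \dim \Sigma'$. The isomorphism statement of the theorem is then obtained by applying the lemma directly to two minimal, hence observable, RC-satisfying realizations of the same manifest behavior.

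The principal obstacle I anticipate is making the family-level extension of \cite{LPVSSAkalman} rigorous: the original theory is written for a single i/o function from a fixed initial state, where both observability and span-reachability are needed for minimality, and both the Nerode-type construction and the extraction of the affine coefficients $A_i,B_i,C_i,D_i$ from the intertwining relations must be redone in the family setting, where span-reachability becomes automatic but uniqueness of $T$ demands care. A secondary but genuine subtlety is ensuring that the observable quotient of an arbitrary comparator $\Sigma'$ inherits RC: RC(1) is automatic since $\mathbb{P}$ is preserved, and in the DT case $A_i$-invariance of $\mathcal{O}'$ together with invertibility of $A(\mathbf{p})$ on the full state space of $\Sigma'$ yields invertibility on the quotient, but if $\Sigma'$ itself fails RC(2) then a separate rank-based lower bound on $\dim \Sigma'$ is needed to close the argument.
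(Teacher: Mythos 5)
Your proposal is correct in outline and shares the paper's overall skeleton --- necessity via quotienting by the unobservable subspace (this is exactly Procedure \ref{LSSobs} together with Remark \ref{LSSobs:rem}), and sufficiency plus the isomorphism claim via Theorem \ref{col:behavior}, which converts equality of manifest behaviors into equality of families of i/o functions --- but it takes a genuinely different route for the key technical step. The paper does not prove your family-level Nerode lemma directly: it associates to each LPV-SSA a linear switched system $\SWS(\Sigma)$ and to each i/o function a switched i/o map following \cite{LPVSSAkalman}, observes that observability of $\Sigma$ makes the state map $\mu$ surjective and hence $(\SWS(\Sigma),\mu)$ span-reachable and observable, and then invokes the minimality and isomorphism theorems for linear switched systems from \cite{PetCocv11,Pet12}, pulling the resulting isomorphism back to an LPV-SSA isomorphism via \cite[Theorem 4]{LPVSSAkalman}. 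Your route proves the same lemma from scratch: the bijection $T$ exists and is linear because the zero-input response map $x \mapsto \mathfrak{Y}_{\Sigma,x}(0,\cdot)$ is linear and injective under observability (one detail to nail down first is that $\mathfrak{Y}_{\Sigma,0}=\mathfrak{Y}_{\Sigma',0}$, which follows since $\mathfrak{Y}_{\Sigma,0}(0,p)\equiv 0$ forces the $\Sigma'$-state matched to $0$ to be $0$), and the intertwining identities follow by propagating $T$ along state trajectories at a single time step and separating the affine coefficients using the nonempty interior of $\mathbb{P}$; this is more elementary and self-contained, at the price of redoing details the paper imports wholesale from the switched-systems literature. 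One point where you are in fact more careful than the paper: minimality as defined quantifies over \emph{all} LPV-SSA realizations of $\mathcal{B}$, yet Theorem \ref{col:behavior} only applies when the comparator satisfies RC; you explicitly flag that a separate dimension bound is needed when $\Sigma'$ violates RC(2) in DT, whereas the paper's proof of the sufficiency direction silently assumes the comparator $\hat{\Sigma}$ satisfies RC.
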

The proof of Theorem \ref{the:behavior_min1} is presented in Section \ref{sect:proof_main}.
Note that minimality of LPV-SSAs does not require span-reachability. This is in contrast with  minimal LPV-SSA realizations of i/o functions,
but this is consistent with the classical results for  LTI systems \textcolor{black}{\cite{PW91}}.

Theorem \ref{the:behavior_min1} suggests a minimization procedure based on the observability reduction procedure 
from \cite{MertLPVCDC2015,LPVSSAkalman}. We  recall the latter below.
 Let $\Sigma$ be an LPV-SSA of the form \eqref{equ:alpvss}, and recall from \cite{LPVSSAkalman}
 the definition of extended $n$-step observability matrices $\mathcal{O}_n$ of $\Sigma$, $n \in \mathbb{N}$:
  \begin{align*}
	  \mathcal{O}_0 &=     \left[\begin{array}{cccc}  C_0^\top &  \cdots & C_{n_\mathrm{p}}^\top \end{array}\right]^\top \\
    \mathcal{O}_{n+1} &=
    \left[\begin{array}{cccc}  
      \mathcal{O}_n^\top & 
      A_0^\top \mathcal{O}_{n}^\top  &
     \cdots & A_{n_\mathrm{p}}^\top \mathcal{O}^\top_n \end{array}\right]^\top.
 \end{align*}
\textcolor{black}{By \cite{LPVSSAkalman},} $\Sigma$ is observable, if and only if $\mathrm{rank} ~(\mathcal{O}_{n_{\mathrm x}-1})=n_{\mathrm x}$.
\begin{Procedure}[Observability reduction]
\label{LSSobs}
	Consider the matrix $T=\begin{bmatrix} b_1 & b_2 & \ldots & b_{n_{\mathrm{x}}} \end{bmatrix}^{-1}$,  where
	$\{b_i\}_{i=1}^{n_\mathrm{x}} \subset \mathbb{R}^{n_\mathrm{x}}$ is a basis such that  $\mathrm{Span}\{b_{o+1},\ldots,b_{n_\mathrm{x}} \}= \mathrm{Ker} \{ \mathcal{O}_{n_\mathrm{x}-1}\} $, 
		In the new basis, the matrices
$\{A_i,B_i,C_i\}_{i=0}^{n_\mathrm{p}}$ become
\begin{align*}
	TA_iT^{-1}=\begin{bmatrix} A_{i}^{\mathrm O} & 0 \\ A^\prime_{i} & A^{\prime\prime}_{i} \end{bmatrix}, ~
		TB =\begin{bmatrix}  B_{i}^{\mathrm O} \\ B_i^{\prime} \end{bmatrix}, ~
	C_i T^{-1}=\begin{bmatrix} C_i^{\mathrm O} & 0 \end{bmatrix}, 
\end{align*} 
where $A^{\mathrm O}_{i} \in \mathbb{R}^{o \times o}, B_i^{\mathrm O}
\in \mathbb{R}^{o \times n_\mathrm{u}}$ and $C_i^{\mathrm O} \in \mathbb{R}^{n_\mathrm{y}
  \times o}$.  Define $\Sigma^{\mathrm O}= (\mathbb{P},\{A_i^{\mathrm O}, B_i^{\mathrm O}, C_i^{\mathrm O},D_i\}_{i=0}^{n_\mathrm{p}})$. 
\end{Procedure}
\textcolor{black}{Procedure \ref{LSSobs} is similar to the well-known observability reduction for LTI/bilinear systems, and it can readily be implemented numerically, e.g., see \cite[Remark 2]{MertLPVCDC2015}}
\begin{rem}
\label{LSSobs:rem}
	\textcolor{black}{By \cite{LPVSSAkalman},
    $\Sigma^{\mathrm O}$ is observable. }
    \textcolor{black}{Let $\Pi \in \mathbb{R}^{o \times n_{\mathrm x}}$ 
    be such that 
    $\Pi z$ is formed by the first $o$ elements of $Tz$.}
       Then  $(x,y,u,p)$ is a solution of $\Sigma$, if and only if $(\Pi x,y,u,p)$ is a solution of $\Sigma_{\mathrm O}$.
	Hence, $\mathcal{B}(\Sigma)=\mathcal{B}(\Sigma_{\mathrm O})$. 
	\textcolor{black}{Moreover, for any initial state $x_{\mathrm o}$ of $\Sigma$}, 
	$\mathfrak{Y}_{\Sigma,x_{\mathrm o}}=\mathfrak{Y}_{\Sigma_{\mathrm O},\Pi (x_{\mathrm o})}$.
%
%
%
%
\textcolor{black}{Furthermore, if $\Sigma$ satisfies RC, then so does $\Sigma_{\mathrm O}$}. For CT, there is nothing to show. 
	For DT, notice that $A_{\mathrm O}(\mathbf{p})$, $\mathbf{p} \in \mathbb{P}$ is the upper left block of 
    the triangular matrix
	$\hat{A}(\mathbf{p})=TA(\mathbf{p})T^{-1}$, hence if $A(\mathbf{p})$ is invertible, then so is $A_{\mathrm O}(\mathbf{p})$.
\end{rem}
\begin{corollary}[Minimization]
	\textcolor{black}{If $\Sigma$ satisfies RC, then $\Sigma_{\mathrm O}$ returned
	by Procedure \ref{LSSobs} satisfies RC, it is minimal and it has the same manifest behavior as $\Sigma$. }
\end{corollary}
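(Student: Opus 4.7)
The plan is to assemble the corollary directly from the facts already collected in Remark \ref{LSSobs:rem} together with Theorem \ref{the:behavior_min1}; no new technical work is required, so I would present it as a short three-step argument rather than a ground-up proof.

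First, I would invoke Remark \ref{LSSobs:rem} to dispatch two of the three claims at once: it is explicitly stated there that $\Sigma_{\mathrm O}$ satisfies RC whenever $\Sigma$ does (for CT trivially, since RC depends only on $\mathbb{P}$, and for DT via the observation that $A_{\mathrm O}(\mathbf{p})$ is a diagonal block of the block-triangular matrix $T A(\mathbf{p}) T^{-1}$, hence invertible whenever $A(\mathbf{p})$ is), and that $\mathcal{B}(\Sigma)=\mathcal{B}(\Sigma_{\mathrm O})$ holds via the bijective correspondence of solutions $(x,y,u,p) \leftrightarrow (\Pi x,y,u,p)$. So the first and third conclusions of the corollary are inherited verbatim from the remark.

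Second, I would establish observability of $\Sigma_{\mathrm O}$ by citing the statement in Remark \ref{LSSobs:rem} (which itself relies on \cite{LPVSSAkalman}): the construction of $T$ from a basis whose last $n_{\mathrm x}-o$ vectors span $\mathrm{Ker}\,\mathcal{O}_{n_{\mathrm x}-1}$ forces the extended observability matrix of $\Sigma_{\mathrm O}$ to have full column rank $o$, so the rank-based observability characterization of \cite{LPVSSAkalman} yields observability of $\Sigma_{\mathrm O}$.

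Finally, I would combine the two preceding steps with Theorem \ref{the:behavior_min1}. Since $\Sigma_{\mathrm O}$ satisfies RC and is observable, Theorem \ref{the:behavior_min1} gives that $\Sigma_{\mathrm O}$ is minimal (as a realization of its own manifest behavior, which coincides with $\mathcal{B}(\Sigma)$). Hence $\Sigma_{\mathrm O}$ is in particular a minimal realization of $\mathcal{B}(\Sigma)$, completing the proof.

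I do not foresee a genuine obstacle here, as every nontrivial ingredient (the solution-preserving relation through $\Pi$, the preservation of RC, observability after reduction, and the observability-implies-minimality theorem) has already been established. The only point that warrants a brief sentence of care is the DT invertibility argument for $A_{\mathrm O}(\mathbf{p})$, since this is the one place where the block-triangular structure induced by the basis ordering is actually used; but this is already spelled out in Remark \ref{LSSobs:rem} and needs no elaboration.
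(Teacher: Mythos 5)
Your proposal is correct and follows exactly the route the paper intends: the paper gives no separate proof of this corollary precisely because it is the immediate combination of Remark \ref{LSSobs:rem} (preservation of RC, observability of $\Sigma_{\mathrm O}$, and $\mathcal{B}(\Sigma)=\mathcal{B}(\Sigma_{\mathrm O})$ via the solution correspondence through $\Pi$) with Theorem \ref{the:behavior_min1} (RC plus observability implies minimality). Nothing is missing.
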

\textcolor{black}{As in the LTI case, span-reachability is necessary for minimality} of LPV-SSA realizations of \emph{controllable} behaviors, defined similarly to \textcolor{black}{\cite{PW91}}.
\begin{definition}
	The manifest behavior $\mathcal{B}$ is \emph{controllable}, if for any two  \textcolor{black}{elements} $(y_1,u_1,p_1)$, $(y_2,u_2,p_2)$ of $ \mathcal{B}$ and any time instance $t  \in \mathbb{T}$, there exists an 
    \textcolor{black}{element}
	$(y,u,p)$ of $\mathcal{B}$ and a time instance $\tau > 0$, such that 
	$(y|_{[0,t]},u|_{[0,t]},p|_{[0,t]})=(y_1|_{[0,t]},u_1|_{[0,t]},p_1|_{[0,t]})$, and  for all $s \in \mathbb{T}$, $s \ge t+\tau$,
	$(y(s),u(s),p(s))=(y_2(s-t-\tau),u_2(s-t-\tau),p_2(s-t-\tau))$.
\end{definition}

Intuitively, a behavior is controllable, if any i/o trajectory generated by the system up to some time 
can be continued by any other admissible i/o trajectory. 
\begin{theorem}
\label{the:behavior_min2}
        Let $\mathcal{B}$ is a manifest behavior, and let $\Sigma$ be a LPV-SSA which satisfies RC.
	If $\mathcal{B}$ is a controllable, then  $\Sigma$ is a minimal realization of $\mathcal{B}$ if and only if
	$\Sigma$ is span-reachable from zero and observable. 
	Conversely, if $\Sigma$ is span-reachable from zero, then its manifest behavior
	$\mathcal{B}(\Sigma)$ is controllable. 
\end{theorem}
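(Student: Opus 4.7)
The backward direction of the ``iff'' in part (a) is essentially free: by Theorem~\ref{the:behavior_min1}, observability already implies minimality under RC, so the span-reachability hypothesis is not strictly needed for this direction. The substantive content of part (a) is therefore to show that, under controllability of $\mathcal{B}$, minimality (equivalently, observability by Theorem~\ref{the:behavior_min1}) forces $\Sigma$ to be span-reachable from zero; part (b) is then a separate, constructive argument.

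For the forward direction of (a), I would carry out a Kalman-style decomposition with respect to the span-reachable subspace $W_0 = \mathrm{Span}\{x(t) \mid (x,y,u,p)\in\mathcal{X}\times\mathcal{Y}\times\mathcal{U}\times\mathcal{P},\ x(0)=0,\ t\in\mathbb{T}\}$. A direct computation on the dynamics shows that $W_0$ is $A(\mathbf{p})$-invariant and contains $\mathrm{Im}\,B(\mathbf{p})$ for every $\mathbf{p}\in\mathbb{P}$; choosing a complement $V$ of $W_0$ in $\mathbb{R}^{\NX}$ yields block-triangular forms of the system matrices. The block restricted to $W_0$ defines a reduced LPV-SSA $\Sigma^\dagger$ of dimension $\dim W_0 \le \NX$, which still satisfies RC by the same reasoning as in Remark~\ref{LSSobs:rem}. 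The key claim is $\mathcal{B}(\Sigma^\dagger)=\mathcal{B}(\Sigma)$; combined with minimality of $\Sigma$, this forces $W_0=\mathbb{R}^{\NX}$. To prove the claim, fix $(y,u,p)\in\mathcal{B}(\Sigma)$ and any $p_0\in\mathcal{P}$; since $(0,0,p_0)\in\mathcal{B}(\Sigma)$ (realized from $x_{\mathrm o}=0$), controllability of $\mathcal{B}$ yields $t,\tau>0$ and a trajectory $(y',u',p')\in\mathcal{B}(\Sigma)$ agreeing with $(0,0,p_0)$ on $[0,t]$ and with the time-shift of $(y,u,p)$ on $[t+\tau,\infty)$. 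Using observability together with RC, one argues that this connecting trajectory can be realized from the zero initial state; its state at time $t+\tau$ is then reachable from zero and so lies in $W_0$, and the time-invariance of~\eqref{equ:alpvss} then exhibits $(y,u,p)$ as an element of $\mathcal{B}(\Sigma^\dagger)$. The main obstacle is this last step: showing that a trajectory whose output and input are zero (and whose scheduling is $p_0$) on a long enough interval admits the zero initial state as a realizer. I expect RC to be essential here via the rank characterization of observability from~\cite{LPVSSAkalman}, together with varying $p_0$ over a sufficiently rich subset of $\mathcal{P}$ so as to invoke observability across schedulings.

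For part (b), given $(y_i,u_i,p_i)\in\mathcal{B}(\Sigma)$ with initial states $x^i_{\mathrm o}$ for $i=1,2$, the connecting trajectory required by the definition of controllability amounts to steering the state from $x_1(t)$ at time $t$ to $x^2_{\mathrm o}$ at some later time $t+\tau$. I would do this by routing through the origin: drive $x_1(t)\to 0$ first, then $0\to x^2_{\mathrm o}$. The second leg is provided by span-reachability from zero, modulo the subtle point that span-reachability only asserts $\mathrm{Span}\{\text{states reachable from }0\}=\mathbb{R}^{\NX}$, not that every specific state is reachable; this gap is closed by exploiting the freedom to choose the scheduling together with the linearity of the state map in the input at a fixed scheduling, so that a suitable concatenated scheduling combined with an appropriate input can hit any prescribed target in $\mathbb{R}^{\NX}$. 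For the first leg, invertibility of $A(\mathbf{p})$ in the DT case (supplied by RC) enables a time-reversal construction, while the CT case is a direct integration argument. The main obstacle in part (b) is precisely the upgrade from span-reachability to the pointwise reachability of arbitrary target states.
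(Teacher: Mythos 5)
Your high-level structure is right (the backward implication of part (a) is indeed free from Theorem~\ref{the:behavior_min1}), and you have correctly located the two hard points. But in both places you stop exactly where the real work begins, and the workarounds you sketch do not close the gaps. The missing ingredient in both is the paper's Theorem~\ref{min:compare:col1}: for an observable LPV-SSA satisfying RC there is a \emph{single} ``observability-revealing'' scheduling signal $p_{\mathrm o}$ and a finite horizon $t_{\mathrm o}$ such that agreement of the zero-input responses on $[0,t_{\mathrm o}]$ under $p_{\mathrm o}$ already forces equality of the full i/o functions. This is obtained by viewing the zero-input LPV-SSA as a bilinear system with the scheduling as input and invoking the universal-input theorems of Sussmann (CT) and Wang--Sontag (DT); RC (convexity of $\mathbb{P}$ with nonempty interior, invertibility of $A(\mathbf{p})$ in DT) is needed precisely to apply those results. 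Your substitute --- ``varying $p_0$ over a sufficiently rich subset of $\mathcal{P}$'' --- does not work: the definition of controllability of $\mathcal{B}$ produces, for each choice of $p_0$, a \emph{different} connecting trajectory and hence a different candidate initial state $\hat{x}_{\mathrm o}(p_0)$, so you cannot aggregate information across schedulings to conclude that any one realizer is (indistinguishable from) zero. With the universal $p_{\mathrm o}$ in hand the paper does not need your reachability decomposition at all: it shows directly that \emph{every} state $x_{\mathrm o}$ is exactly reached from $0$, by connecting $(0,0,p_{\mathrm o})$ to the shift of $(\mathfrak{Y}_{\Sigma,x_{\mathrm o}}(0,p_{\mathrm o}),0,p_{\mathrm o})$, identifying the realizer of the connecting trajectory as $0$ (universality plus observability), and identifying the state at time $t_{\mathrm o}+\tau$ as $x_{\mathrm o}$ (universality plus observability again).

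The same gap recurs in part (b). The upgrade from span-reachability to exact steering of arbitrary targets is Corollary~\ref{min:compare:col2}: by dualizing and applying Theorem~\ref{min:compare:col1} to $\Sigma^{T}$, one obtains a single scheduling $p_{\mathrm r}$ and horizon $t_{\mathrm r}$ such that the frozen LTV system $\Sigma(p_{\mathrm r})$ is \emph{completely controllable} on $[0,t_{\mathrm r}]$; any state can then be steered to any other state along $p_{\mathrm r}$, and the connecting trajectory is built by following trajectory 1 up to time $t$, inserting this steering segment, and continuing with trajectory 2 (no detour through the origin, and no time-reversal, is needed). Your proposed fix --- concatenating schedulings and using linearity in $u$ at fixed scheduling --- is not a proof: the exactly-reachable set from $0$ is a union of subspaces $W_{p,t}$ whose \emph{span} is $\mathbb{R}^{\NX}$, but concatenating $p_1$ and $p_2$ only reaches $\Phi_{p_2}(t_2,0)W_{p_1,t_1}+W_{p_2,t_2}$, which need not grow toward the full space without further argument. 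In short: the proposal is a reasonable plan, but it is missing the one lemma (the universal scheduling signal and its dual) on which the entire proof rests, and the heuristics offered in its place would fail.
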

The proof of Theorem \ref{the:behavior_min2} is presented in Section \ref{sect:proof_main}.
Recall from \cite{LPVSSAkalman} that a LPV-SSA is a minimal realization of an i/o function from the zero initial state, if and only if it is observable and span-reachable from zero.  
\textcolor{black}{Theorem \ref{the:behavior_min2} says that
for LPV-SSAs  which satisfy RC are minimal realizations of an i/o function from the zero initial state, if and only if
they minimal realizations of their own manifest behaviors.  }


\subsection{Relationship with the prior results}
Below we show that 
Theorem \ref{the:behavior_min1}-\ref{the:behavior_min2} are consistent with the results of \cite{Toth2010SpringerBook}. 
To this end, recall that LPV-SSAs are special cases of meromorphic LPV-SSs. Recall from \cite{Toth2010SpringerBook} the notions of 
structural state-observability and structural state-reachability and state-trimness \textcolor{black}{and minimality}.
\begin{theorem}
\label{min:compare}
 If $\Sigma$ is a LPV-SSA which satisfies RC, then it is state-trim and the following holds:
 \begin{itemize}
\item if it is observable, then it is structurally state-observable. 
\item if is  span-reachable from $x_\mathrm{o}=0$, then it is  structurally state-reachable.
\item if it is a minimal, then it is a minimal dimensional meromorphic LPV-SS in the sense of \cite{Toth2010SpringerBook}.
 \end{itemize}
\end{theorem}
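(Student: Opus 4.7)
The plan is to address each clause by translating the affine notions of Definition \ref{def:reachobs} and Theorem \ref{the:behavior_min1} into the meromorphic framework of \cite{Toth2010SpringerBook}, using RC as the bridge that guarantees the set of admissible trajectories is rich enough for the two settings to agree. Throughout, I will exploit the fact that an LPV-SSA is a meromorphic LPV-SS whose matrix entries happen to be affine polynomials in the instantaneous scheduling.

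First, for state-trimness, I would note that for any $x_{\mathrm o} \in \mathbb{R}^{\NX}$ and any $(u,p) \in \mathcal{U}\times \mathcal{P}$ (nonempty since $\mathbb{P}$ has nonempty interior by RC), the existence-and-uniqueness statement recalled after the definition of $\mathcal{B}(\Sigma)$ yields a unique solution $(x,y,u,p)$ of \eqref{equ:alpvss} with $x(0)=x_{\mathrm o}$. Hence every point of $\mathbb{R}^{\NX}$ occurs as a state value along some admissible trajectory, which is exactly the meromorphic definition of state-trimness. This argument needs no additional assumption beyond RC.

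Next, for the observability and reachability implications, my plan is to use the finite rank characterizations already recalled in the paper: $\Sigma$ is observable iff $\mathrm{rank}\,\mathcal{O}_{n_{\mathrm x}-1}=n_{\mathrm x}$, and an analogous reachability matrix rank condition characterizes span-reachability from zero (cf.\ \cite{LPVSSAkalman}). In the meromorphic formalism of \cite{Toth2010SpringerBook}, structural state-observability and structural state-reachability are defined via generic rank conditions on observability and reachability modules built from iterated products of the scheduling-dependent system matrices. The key step is to evaluate these meromorphic expressions at frozen constant schedulings $\mathbf{p} \in \mathbb{P}$; RC (nonempty interior of $\mathbb{P}$, plus invertibility of $A(\mathbf{p})$ in DT) ensures that the set of such admissible evaluations is Zariski-dense in the ambient parameter space, so that the affine rank of $\mathcal{O}_{n_{\mathrm x}-1}$ (resp.\ of its reachability counterpart) is a lower bound for, and under RC coincides with, the meromorphic rank. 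Hence $\mathrm{rank}\,\mathcal{O}_{n_{\mathrm x}-1}=n_{\mathrm x}$ forces structural state-observability, and span-reachability from zero forces structural state-reachability.

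Finally, for the minimality bullet I would invoke the characterization from \cite{Toth2010SpringerBook} that a meromorphic LPV-SS is minimal iff it is state-trim, structurally state-observable and (under controllability of the realized behavior) structurally state-reachable. Given a minimal LPV-SSA satisfying RC, Theorem \ref{the:behavior_min1} shows it is observable, while state-trimness is automatic; the preceding bullets then supply the meromorphic counterparts, so $\Sigma$ meets the meromorphic minimality conditions and in particular its dimension cannot be beaten by any meromorphic LPV-SS realizing $\mathcal{B}(\Sigma)$. The main obstacle I anticipate is the careful bookkeeping in the middle step: the meromorphic rank is taken over a field of fractions of analytic scheduling functions whereas the affine rank is computed over $\mathbb{R}$, and RC is precisely what is needed to show that generic evaluation does not drop the rank, so that the two quantities agree. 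Care must also be taken in DT because the invertibility of $A(\mathbf{p})$ is essential for the dynamic dependency machinery of \cite{Toth2010SpringerBook} to apply without singular points; this is why RC in DT includes that invertibility clause.
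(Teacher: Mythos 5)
Your reduction of the second and third bullets to a rank comparison contains a genuine gap: the claim that the rank of the constant extended observability matrix $\mathcal{O}_{\NX-1}$ is a lower bound for (and under RC equal to) the rank of the meromorphic observability matrix over the field of meromorphic functions is false as stated, and Zariski-density of the admissible evaluations does not deliver it. What density gives you is that the union, over all scheduling evaluations, of the row spans of the evaluated meromorphic observability matrix spans the row space of $\mathcal{O}_{\NX-1}$; structural state-observability, however, requires a \emph{single} (generic) evaluation to already have full rank, and these two statements are not equivalent. The LPV-SSA $\Sigma$ of Example \ref{rem:iobehav} is a concrete counterexample to your inequality: there $\mathcal{O}_0=I_2$, so $\rank\,\mathcal{O}_{\NX-1}=2$ and $\Sigma$ is observable, yet every block row of its meromorphic observability matrix equals $[\,1 \;\; p\,]$, so the rank over the function field is $1$ and $\Sigma$ is not structurally state-observable. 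That system has $\mathbb{P}=\mathbb{R}$, so your density hypothesis holds; the only part of RC it violates is invertibility of $A(\mathbf{p})$ in DT. Hence the DT invertibility clause is not, as you suggest, a technicality needed for the dynamic-dependency machinery to avoid singular points; it is exactly what excludes the failure mode above. (In CT your argument is also shaky for a different reason: the meromorphic observability matrix depends on $\dot p,\ddot p,\dots$, and frozen constant schedulings evaluate all of these to zero, so they are not Zariski-dense in the relevant variable space.)

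The paper closes this gap with a universal-scheduling argument rather than a rank comparison: Theorem \ref{min:compare:col1} (built on the universal input theorems of \cite{SussmannUnivObs} in CT and \cite{WangIO} in DT, which is where both parts of RC are actually consumed) produces a single $p_{\mathrm o}$ such that the frozen LTV-SS $\Sigma(p_{\mathrm o})$ is observable on $[0,t_{\mathrm o}]$ (Corollary \ref{min:compare:col11}); full column rank of the observability matrix of that one LTV system then yields structural state-observability via \cite[Definition 3.34]{Toth2010SpringerBook}, the reachability bullet follows by duality, and the minimality bullet by \cite[Theorem 3.14]{Toth2010SpringerBook}. Your treatment of state-trimness and of the final bullet (granting the first two) is fine, but the middle step must be replaced by an argument of this universal-input type.
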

The proof is presented in Section \ref{sect:proof_main}.
Note that Theorem \ref{min:compare} ceases to be true, if $\Sigma$ does not satisfy RC, see 
\cite[Example 4.1]{CoxThesis} for a counter-example.
\textcolor{black}{In general, there is a tradeoff between the dimensionality of LPV-SSs and the dependence on the scheduling variable (meromorphic, affine), \cite{CoxThesis}. However, for LPV-SSAs which satisfy RC, there is no such tradeoff, i.e. the algorithms of \cite{Toth2010SpringerBook,Toth11_LPVBehav} will not result in smaller state-space representations when applied to such LPV-SSAs. However, they may still introduce meromorphic dependencies on the schedulung variable, see the example below.}
\begin{example}
	Consider in DT the LPV-SSA $\Sigma$ of the form \eqref{equ:alpvss}, such that $\mathbb{P}=[0,1]$ and 
	\[
	   \begin{split}
		   & A_0=\begin{bmatrix}
		    1  &  -2 &   -2 \\
		    0  &    2  &   1 \\
		    -2 &    1 &    2 
		 \end{bmatrix}, ~
	    A_1=\begin{bmatrix}
		   1   & -1 &   -1 \\
		   -1  &   2 &    0 \\
		    -1 &    0 &    2
	    \end{bmatrix} \\
	   &  B_0=\begin{bmatrix} 1 \\ -1 \\ -1 \end{bmatrix},  ~  B_1=\begin{bmatrix} 2 \\ -2 \\ -2 \end{bmatrix} \\
		   &  C_0=\begin{bmatrix} 1 \\ 0 \\ 0 \end{bmatrix}^{\top}, ~ C_1=\begin{bmatrix} 0 \\ 1 \\ 1 \end{bmatrix}^{\top}	   
           \end{split}
	\]
      It follows that 
      \[ 
        \begin{split}
		& A(\mathbf{p})=\begin{bmatrix}  \mathbf{p} + 1 &  -\mathbf{p} - 2 &  - \mathbf{p} - 2 \\
	      -\mathbf{p}, 2\mathbf{p} + 2 &        1 \\
		- \mathbf{p} - 2 &        1 & 2\mathbf{p} + 2 \\
      \end{bmatrix}, \\
		& B(\mathbf{p})=\begin{bmatrix} 2\mathbf{p}+1 \\ -2\mathbf{p}-1 \\ -2\mathbf{p}-1 \end{bmatrix},  ~
			C(\mathbf{p})=\begin{bmatrix} 1 \\ \mathbf{p} \\ \mathbf{p} \end{bmatrix}^{\top}	      
	\end{split}		
  \]
  It follows that the determinant of $A(\mathbf{p})$ is $-2\mathbf{p}^2-3\mathbf{p}-1$ and the latter polynomial is non-zero on $[0,1]$.
  That is, in DT, $\Sigma$ satisfies RC. If we apply the observability reduction procedure Procedure \ref{LSSobs} to $\Sigma$, then
  we obtain the following minimal LPV-SSA $\Sigma_m=(\mathbb{P}, \{A_i^m,B_i^m,C_i^m\}_{i=0}^{1})$,
  \[
     \begin{split}
	     &      A_0^m=\begin{bmatrix} 1 & -2 \\ -2 & 3 \end{bmatrix}, ~	
      A_1^m=\begin{bmatrix} 1 & -1 \\ -2 & 2 \end{bmatrix}, ~	 \\
	      &      B_0^m=\begin{bmatrix} 1 \\ 2 \end{bmatrix}, ~ 	      
		      B_1^m=\begin{bmatrix} 2 \\ -4 \end{bmatrix},  ~ 
			      C_0^m=\begin{bmatrix} 1 \\ 0 \end{bmatrix}^{\top}, ~ 	      
			      C_1^m=\begin{bmatrix} 0 \\ 1 \end{bmatrix}^{\top}
     \end{split}
  \]	  
  which has the same manifest behavior as $\Sigma$.

  Let us view $\Sigma$ as meromorphic LPV-SS and let us minimize $\Sigma$ using \cite{Toth2010SpringerBook,Toth11_LPVBehav}.
  Recall the notation \cite{Toth2010SpringerBook,Toth11_LPVBehav}, in particular, the $\diamond$ symbol for DT.
  In particular, for a will consider matrix valued functions $S$ defined on $\mathbb{P}^k$ and whos entries are meromoprhic functions, 
  for any scheduling signal
  $p \in \mathcal{P}$, $(S \diamond p)$ is a function defined on the time axis $\mathbb{T}$, 
  such that for any $t \in \mathbb{T}$, $(S \diamond p)(t)=S(p(t),\xi p(t),\ldots, \xi^{k-1} p(t))=S(p(t),p(t+1),\ldots,p(t+k-1))$.
  Recall that $\xi$ is the forward time shift. 
  In particular, $(S \diamond p)$ can be viewed as an expression in $p,\xi p, \xi^2 p, \ldots$, see
  \cite{Toth2010SpringerBook}.

  Wit this notation,  for any $p \in \mathcal{P}$ the $3$-step observability matrix is 
 \[ (O \diamond p)=
\begin{bmatrix}
	1 &   p  & p \\
	(O \diamond p)_{2,1} & 	(O \diamond p)_{2,2}  & (O \diamond p)_{2,3} \\
	(O \diamond p)_{3,1} & 	(O \diamond p)_{3,2}  & (O \diamond p)_{3,3} 

 \end{bmatrix}
\]
where 
\[ 
 \begin{split}
  &    (O \diamond o)_{2,1}=-(2\xi p - 1)(p + 1) \\
  &   (O \diamond p)_{2,2}=3\xi p - p + 2 p \xi p - 2  \\
  & (O \diamond p)_{2,3}=3 \xi p - p + 2 p \xi p - 2 \\
 & (O \diamond p)_{3,1}=(p + 1)(3 \xi p - 8\xi^2p - 6p \xi^2p + 5) \\
 & (O \diamond p)_{3,2}=(O \diamond p)_{3,3}=13 \xi^2 p - 5\xi p - 5p - 3p \xi p + \\ 
	 & + 8p \xi^2p + 10 \xi p \xi^2 p + 6p \xi p \xi^2p - 8
 \end{split}
 \]
Recall that $p,\xi p,\xi^2p$ etc. are all viewed as formal variables when defining the observability matrix. 
Over the field of meromoprhic functions, then  kernel of $(O \diamond p)$ is spanned by $\begin{bmatrix} 0 & 1 & -1 \end{bmatrix}^{\top}$. 
Consider the state-space transformation
\[ 
   (T \diamond p)=\begin{bmatrix} 1 &   0 & 0 \\  
	   0 &  1 & 1 \\
	   (t_{31} \diamond p) &  (t_{32} \diamond p) & -1 
   \end{bmatrix}^{-1}
\]   
where  $(t_{31} \diamond p)$  and  $(t_{32} \diamond p)$ are defined in \eqref{ex:formula}. 
\begin{figure*}[t]
	\begin{equation}
		\label{ex:formula}
	\begin{split}
	& (t_{31} \diamond p)=\frac{((p + 1)(2\xi p - 3\xi^2p - 4\xi p \xi^2 p - 2 \xi p^2 \xi^2p + \xi p^2 + 2))}{(2p - 3\xi p - 4p \xi p - 2 p^2 \xi p + p^2 + 2)} \\
	& (t_{32} \diamond p)=\frac{(10p + 5\xi p - 13\xi^2 p + 6p\xi p - 16p\xi^2p - 10\xi p \xi^2 p + 3p^2\xi p - 8p^2 \xi^2p + 5 p^2 - 6 p^2 \xi p \xi^2p - 12 p \xi p \xi^2p + 8}{(2p - 3\xi p - 4p \xi p - 2 p^2 \xi p + p^2 + 2)} 
\end{split}
	\end{equation}
\end{figure*}
In the expression above, the inverse is understood as the inverse of a matrix over the field of meromorphic functions of the variables $p,qp,\ldots$. The expression $(T \diamond p)$ and $(O \diamond p)$ were computed using Matlab symbolic toolbox.
For any scheduling signal $p \in \mathcal{P}$, define  
\[ (A \diamond p)=A_0+A_1 p, ~ (B \diamond p)=B_0+B_1 p, ~(C \diamond p)=C_0+C_1 p, \]
i.e., $(A \diamond p)(t)=A(p(t))$, $(B \diamond p)(t)=B(p(t))$ and $(C \diamond p)(t)=C(p(t))$
for all $t \in \mathbb{T}$. 
Then  the matrices of the transformed system are of the form
\[ 
  \begin{split}
 &   \stackrel{\leftarrow}{(T \diamond p)} (A \diamond p) (T \diamond p)^{-1} = \begin{bmatrix} (A^{\mathrm o} \diamond p)  & 0  \\
                                                                \star & \star  \end{bmatrix},  \\
	  &  \stackrel{\leftarrow}{(T \diamond p)} (B \diamond p) = \begin{bmatrix} (B^{\mathrm o} \diamond p) \\ \star \end{bmatrix}, ~ \\
		  &   (C \diamond p) (T \diamond p)^{-1} = \begin{bmatrix} (C^{\mathrm o} \diamond p) &  0 \end{bmatrix}
  \end{split}
 \]
 where $\stackrel{\leftarrow}{(T \diamond p)}$ is obtained from $(T \diamond p)$ by replacing each occurence of $p$ by $\xi p$ and
 each occurence of $\xi^i p$ by $\xi^{i+1} p$, for all $i=1,2,\ldots$,
 and  $(A^{\mathrm o} \diamond p)$, $B^{\mathrm o} \diamond p)$ and $(C^{\mathrm o} \diamond p)$ are $2 \times 2$, $2 \times 1$ and $1 \times 2$ matrices with meromophic entries respectively, and $\star$ stands for arbitary matrix.
 The matrices  $(A^{\mathrm o} \diamond p)$, $B^{\mathrm o} \diamond p)$ and $(C^{\mathrm o} \diamond p)$ are $2 \times 2$, $2 \times 1$ and $1 \times 2$ can be found in the .mat files in the supplementary material of this report.

 These matrices depend on $p,\xi p,\xi^2p,\xi^3p$, hence, by a slight abuse of notation,
they can be viewed as matrices of $p,\xi p,\xi^2p,\xi^3p$. 
 According to \cite{Toth2010SpringerBook,Toth11_LPVBehav}  LPV-SS 
 \[
	 \Sigma^{\mathrm o} \left\{
\begin{array}{lcl}
	x (t+1) &=& (A^{\mathrm o} \diamond p)(t) x(t) + (B^{\mathrm o} \diamond p)(t) u(t), \\

	y(t)  &=& (C^{\mathrm o} \diamond p)(t) x(t),
  \end{array}\right.
 \]	 
 has the same manifest behavior as $\Sigma$ and it is a minimal one.
 Note that unlike $\Sigma$, $\Sigma^{\mathrm o}$ has a non-linear and dynamic depency on
 the scheduling variable. To illustrate this, we present $(A^{\mathrm o} \diamond p)_{1,1}$ in \eqref{ex:formula2}.
 \begin{figure*}
\begin{equation}
\label{ex:formula2}	
	(A^{\mathrm o} \diamond p)_{1,1}=\frac{((p + 1)(6\xi^{2}p - 7\xi p - 6p \xi p + 3 p \xi^{2}p + 8\xi p \xi^{2}p - p \xi p^2 - 2p^2 \xi p + 4 \xi p^2 \xi^{2}p + p^2 - 2 \xi p^2 + 2 p \xi p^2 \xi^{2}p + 4 p \xi p \xi^{2}p - 2))}{(2p - 3\xi p - 4p \xi p - 2p^2 \xi p + p^2 + 2)}
\end{equation}
 \end{figure*}
 The other entries of the $A^{\mathrm o} \diamond p$ are even more involved.
 In accordance with Theorem \ref{min:compare}, the LPV-SSA $\Sigma_m$ is also a minimal if viewed as a meromorphic LPV-SS, and 
 it has the same manifest behavior as $\Sigma^{\mathrm o}$. Hence, by \cite{Toth2010SpringerBook,Toth11_LPVBehav} are related
 by a state-space isomorphism which depends on the scheduling variable. We computed the corresponding transformation, it can be found in the .mat file in the supplementary material of this report. 

 The example above demonstrates that if the minimization procedure of  \cite{Toth2010SpringerBook,Toth11_LPVBehav} is applied to
 LPV-SSAs, it will result in a meromoprhic LPV-SS with a dynamic depency. Moreover, even for a simple example, the dependency can get
 quite involved. This is in contrast to the minimization procedure of this paper, which preserves static and affine dependency. 
\end{example}

\section{Proofs of the result}
\label{sect:proof_main}

\subsection{Auxiliary results: observability revealing scheduling}
 The proofs rely on the \textcolor{black}{following observation, which 
 states that for any observable LPV-SSA which satisfies RC,
 there exists a scheduling signal such that the output response to that scheduling signal determines the initial state uniquely. }
%
  \begin{theorem}
\label{min:compare:col1}
 Let $\Sigma$ be an observable LPV-SSA which satisfies RC.
 There exists a scheduling signal $p_\mathrm{o} \in \mathcal{P}$ and $t_\mathrm{o}  \in \mathbb{T}$ such that for any two initial 
	  states $x_1,x_2$ of $\Sigma$, 
	  \[ \textcolor{black}{\mathfrak{Y}_{\Sigma,x_1}(0,p_\mathrm{o})|_{[0,t_{\mathrm o}]}  = \mathfrak{Y}_{\Sigma,x_2}(0,p_\mathrm{o})|_{[0,t_{\mathrm o}]} \implies
	  \mathfrak{Y}_{\Sigma,x_1}=\mathfrak{Y}_{\Sigma,x_2}} \]
 In CT, $p_\mathrm{o}$ can be chosen to be analytic.
\end{theorem}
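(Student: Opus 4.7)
Since $\Sigma$ is observable, $\mathfrak{Y}_{\Sigma,x_1}=\mathfrak{Y}_{\Sigma,x_2}$ is equivalent to $x_1=x_2$, and the zero-input map $x_\mathrm{o}\mapsto \mathfrak{Y}_{\Sigma,x_\mathrm{o}}(0,p_\mathrm{o})|_{[0,t_\mathrm{o}]}$ is linear. Hence it suffices to exhibit $p_\mathrm{o}\in\mathcal{P}$ and $t_\mathrm{o}\in\mathbb{T}$ for which this map is injective. The strategy is to parameterize $p_\mathrm{o}$ by finitely many points in $\mathbb{P}$, write the corresponding state-to-output matrix with entries polynomial in those points, relate the vanishing of its $n_\mathrm{x}\times n_\mathrm{x}$ minors to the extended observability matrix $\mathcal{O}_{n_\mathrm{x}-1}$, and use the non-empty interior of $\mathbb{P}$ supplied by RC to pick an evaluation at which some such minor is nonzero.

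\emph{DT case.} For $u\equiv 0$ and $p_\mathrm{o}(k)=\mathbf{p}_k$, one has $y(k)=C(\mathbf{p}_k)A(\mathbf{p}_{k-1})\cdots A(\mathbf{p}_0)x_\mathrm{o}$. Stacking these for $k=0,\ldots,n_\mathrm{x}-1$ yields a polynomial matrix $\mathcal{O}(\mathbf{p}_0,\ldots,\mathbf{p}_{n_\mathrm{x}-1})$ whose coefficient attached to each monomial in the $\mathbf{p}_j$'s is, by the affine form of $A,C$, a product $C_{i_0}A_{i_1}\cdots A_{i_k}$, that is, a row of $\mathcal{O}_{n_\mathrm{x}-1}$. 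Consequently $\mathcal{O}(\mathbf{p}_0,\ldots)v\equiv 0$ identically iff $\mathcal{O}_{n_\mathrm{x}-1}v=0$, so observability forces the rank of $\mathcal{O}(\mathbf{p}_0,\ldots)$ over the fraction field to equal $n_\mathrm{x}$, and at least one $n_\mathrm{x}\times n_\mathrm{x}$ minor is a nonzero polynomial. Its zero set is a proper algebraic subvariety, and since RC endows $\mathbb{P}^{n_\mathrm{x}}$ with a non-empty interior, one can pick $(\mathbf{p}_0^*,\ldots,\mathbf{p}_{n_\mathrm{x}-1}^*)\in\mathbb{P}^{n_\mathrm{x}}$ off this subvariety, define $p_\mathrm{o}(k)=\mathbf{p}_k^*$ on $\{0,\ldots,n_\mathrm{x}-1\}$ (extended arbitrarily in $\mathbb{P}$), and take $t_\mathrm{o}=n_\mathrm{x}-1$.

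\emph{CT case and main obstacle.} Take $p_\mathrm{o}(t)=\sum_{j=0}^{n_\mathrm{x}-1}\mathbf{p}_j t^j$, which is analytic. Choosing $\mathbf{p}_0\in\mathrm{int}\,\mathbb{P}$ (non-empty by RC) and the remaining coefficients bounded, continuity yields $p_\mathrm{o}([0,t_\mathrm{o}])\subseteq\mathbb{P}$ for some $t_\mathrm{o}>0$; one extends $p_\mathrm{o}$ by a constant past $t_\mathrm{o}$ to obtain a member of $\mathcal{P}$. With $u\equiv 0$, $y$ is analytic on $[0,t_\mathrm{o}]$, so $y|_{[0,t_\mathrm{o}]}\equiv 0$ iff $y^{(k)}(0)=0$ for all $k$. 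Iteratively differentiating $y(t)=C(p_\mathrm{o}(t))x(t)$ together with $\dot x=A(p_\mathrm{o}(t))x$ expresses each $y^{(k)}(0)$ as a linear map in $x_\mathrm{o}$ with coefficients polynomial in $\mathbf{p}_0,\ldots,\mathbf{p}_k$. The delicate combinatorial step, which is the main obstacle, is to verify by induction on $k$ that the set of monomial coefficients arising in $y^{(0)}(0),\ldots,y^{(n_\mathrm{x}-1)}(0)$ spans the same row space as $\mathcal{O}_{n_\mathrm{x}-1}$: the pure monomials in $\mathbf{p}_1$ isolate products of the form $C_0A_{i_k}\cdots A_{i_1}$, while linear combinations with contributions from $\mathbf{p}_0,\mathbf{p}_2,\ldots$ recover the remaining rows. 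Granting this, the same Zariski-density plus RC argument as in the DT case yields parameter values $\mathbf{p}_j^*$ for which $[\,y(0)^\top,\ldots,y^{(n_\mathrm{x}-1)}(0)^\top\,]^\top$ is injective in $x_\mathrm{o}$, completing the construction.
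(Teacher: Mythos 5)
Your DT argument is correct and takes a genuinely more elementary route than the paper. The paper proves both cases by passing to an auxiliary bilinear system (scheduling signal as input, an integrator appended to the output) and invoking off-the-shelf universal-input theorems: Wang--Sontag in DT and Sussmann in CT; this is also why RC demands invertibility of $A(\mathbf{p})$ in DT. Your direct expansion of $C(\mathbf{p}_k)A(\mathbf{p}_{k-1})\cdots A(\mathbf{p}_0)$ works because the scheduling values at distinct times are independent indeterminates, so every multi-index $(i_0,\dots,i_k)$ labels a distinct monomial whose coefficient is exactly the block row $C_{i_0}A_{i_1}\cdots A_{i_k}$ of $\mathcal{O}_{n_\mathrm{x}-1}$; combined with the rank characterization of observability and the Zariski density of the nonempty interior of $\mathbb{P}^{n_\mathrm{x}}$, this yields $p_\mathrm{o}$ and $t_\mathrm{o}=n_\mathrm{x}-1$ directly, and notably without the invertibility half of RC.

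The genuine gap is in the CT case, exactly at the step you flag as the main obstacle, and in its simplest form your claim is not just unverified but false. Differentiating $y=C(p_\mathrm{o}(t))x(t)$ once gives $y^{(1)}(0)=\bigl(\sum_i C_i(\mathbf{p}_1)_i + C(\mathbf{p}_0)A(\mathbf{p}_0)\bigr)x_\mathrm{o}$: the monomial $(\mathbf{p}_0)_i(\mathbf{p}_0)_j$ carries the symmetrized coefficient $C_iA_j+C_jA_i$, not the individual products, because a single time instant supplies both factors. So in CT the monomial coefficients of the output derivatives do not hand you the rows of $\mathcal{O}_{n_\mathrm{x}-1}$; recovering them requires using the higher-order jet $(\mathbf{p}_1,\mathbf{p}_2,\dots)$ to break this symmetry, and it is moreover unclear that derivatives of order at most $n_\mathrm{x}-1$ suffice. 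That desymmetrization is precisely the nontrivial content of Sussmann's universal-input theorem, which the paper invokes (applied to the auxiliary bilinear system, with convexity and nonempty interior of $\mathbb{P}$ securing Sussmann's hypothesis H4). To close your CT argument you would either have to carry out that induction in full --- essentially reproving Sussmann's result for this bilinear class --- or cite it as the paper does.
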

 \textcolor{black}{The proof relies on viewing LPV-SSAs with zero input as bilinear systems whose inputs are the scheduling signals. Then the existence of $p_{\mathrm o}$ follows from the existence of a universal input for bilinear systems \cite{WangIO,SussmannUnivObs}.
 The RC condition is necessary for using
 \cite{WangIO,SussmannUnivObs}.}
 
\begin{proof}[Proof of Theorem \ref{min:compare:col1}]
 Let $\Sigma$ be of the form \eqref{equ:alpvss}.
 since
 It is enough to show that there exists $t_{\mathrm o} \textcolor{black}{\in \mathbb{T}}$, $p_o \in \mathcal{P}$,
	such that 
  \textcolor{black}{if $\mathfrak{Y}_{\Sigma,x_1}(0,p_o)|_{[0,t_{\mathrm o}]} =\mathfrak{Y}_{\Sigma,x_2}(0,p_o)|_{[0,t_{\mathrm o}]}$.
  then $\mathfrak{Y}_{\Sigma,x_1}(0,p)=\mathfrak{Y}_{\Sigma,x_2}(0,p)$ for all $p \in \mathcal{P}$.}
  \textcolor{black}{Indeed, the latter equality implies} 
  $\mathfrak{Y}_{\Sigma,x_1}=\mathfrak{Y}_{\Sigma,x_2}$, as
   $\mathfrak{Y}_{\Sigma,x_i}(u,p)=\mathfrak{Y}_{\Sigma,x_i}(0,p)+\mathfrak{Y}_{\Sigma,0}(u,p)$ for all
 $i=1,2$, $u \in \mathcal{U}$.
To this end, consider the bilinear system
 \begin{equation}
 \label{eq:bilin:aux}
    \begin{split}
	    & \underbrace{\xi \begin{bmatrix} x(t) \\ z(t) \end{bmatrix}}_{\xi \zeta(t)} = \begin{bmatrix} A(p(t)) & 0 \\ C(p(t)) & 0 \end{bmatrix} \underbrace{\begin{bmatrix} x(t) \\ z(t) \end{bmatrix}}_{\zeta(t)}, 
	  ~ s(t)=z(t) 
   \end{split}
 \end{equation}
	with input $p \in \mathcal{P}$ and output $s \in \mathcal{Y}$.
  For any $p \in \mathcal{P}$,
 denote by $s((x_{\mathrm o},z_{\mathrm o}),p)$ respectively by $\zeta((x_{\mathrm o},z_{\mathrm o}),p)$  the output respectively state
trajectory of \eqref{eq:bilin:aux} 
  generated from the initial state $(x^T_{\mathrm o},z^T_{\mathrm o})^T \in \textcolor{black}{\X \times \mathbb{R}^{\NY}}$ \textcolor{black}{under input $p$}.
  We will call \eqref{eq:bilin:aux} observable, if 
  for each pair of distinct states $(x_1,z_1) \ne (x_2,z_2)$, there exists
	$p \in \mathcal{P}$ such that $s((x_1,z_1),p) \ne s((x_2,z_2),p)$. Note that \eqref{eq:bilin:aux} is observable, if and only if
	$\Sigma$ is observable.
  Indeed, 
  $\delta s((x_{\mathrm o},z_{\mathrm o}),p)=\mathfrak{Y}_{\Sigma,x_{\mathrm o}}(p,0)$ and $s((x_{\mathrm o},z_{\mathrm o}),p)(0)=z_{\mathrm{o}}$.
  Hence, if $\Sigma$ is observable and there exists $(x_1,z_1) \ne (x_2,z_2)$ 
  such that $s((x_1,z_1),p) = s((x_2,z_2),p)$ for every $p \in \mathcal{P}$, then
  $z_1=z_2$ and $\mathfrak{Y}_{\Sigma,x_1}(p,0)=\mathfrak{Y}_{\Sigma,x_2}(0,p)$ for all 
  $p \in \mathcal{P}$. The latter implies that $x_1=x_2$ by observability of $\Sigma$.
  Conversely, if \eqref{eq:bilin:aux} is observable, but there exists 
  $x_1 \ne x_2$ such that $\mathfrak{Y}_{\Sigma,x_1}(p,0) = \mathfrak{Y}_{\Sigma,x_2}(0,p)$ for all
  $p \in \mathcal{P}$, then 
  $s((x_1,0),p)=s((x_2,0),p)$ for all 
  $p \in \mathcal{P}$. The latter contradicts to observability of \eqref{eq:bilin:aux}.


\textcolor{black}{In CT, \TR{let} us  take any $t_{\mathrm o} \in \mathbb{T}$, and let  $p_\mathrm{o}$ be the analytic universal input from  \cite[Theorem 2.11]{SussmannUnivObs} applied to \eqref{eq:bilin:aux}. }
 \textcolor{black}{Note that 
 if $\mathbb{P}$ is a convex set with a non-empty interior, then $\mathbb{P}$ satisfies \cite[Condition H4]{SussmannUnivObs} by \cite[Corollary 2.3.9]{WebsterBook}. }
 For the DT case, let $p_{\mathrm o}$ and $t_{\mathrm o}$ be such that $p_{\mathrm o}|_{[0,t_{\mathrm o}]}$ is the universal
 input $\bar{\nu}$ from %
 \cite[page 1120, proof of Theorem 5.3]{WangIO},
 applied to \eqref{eq:bilin:aux}.  
 The proof of 
 \cite[Theorem 5.3]{WangIO} requires observability of \eqref{eq:bilin:aux} and the following property.
 For any two distinct initial states 
 of \eqref{eq:bilin:aux},
 and any input $p \in \mathcal{P}$, and  time $t \in \mathbb{T}$,
 if the outputs generated from these two initial states are equal on $[0,t]$, then the states of \eqref{eq:bilin:aux} at time $t$ reached from these initial states
 are distinct. More precisely, for  any two $(x_{\mathrm{o},1},z_{\mathrm{o},1}) \ne (x_{\mathrm{o},2},z_{\mathrm{o},2})$
which  satisfy $s((x_{\mathrm{o},1},z_{\mathrm{o},1}),p)|_{[0,t]}=s((x_{\mathrm{o},2},z_{\mathrm{o},2}),p)|_{[0,t]}$, 
it holds that
$\zeta((x_{\mathrm{o},1},z_{\mathrm{o},1}),p)(t) \ne \zeta((x_{\mathrm{o},1},z_{\mathrm{o},1}),p)(t)$.
The latter is assured by invertability of $A(\mathbf{p})$, $\mathbf{p} \in \mathbb{P}$. Indeed, 
from  $s((x_{\mathrm{o},1},z_{\mathrm{o},1}),p)|_{[0,t]}=s((x_{\mathrm{o},2},z_{\mathrm{o},2}),p)|_{[0,t]}$, it follows that
$z_{\mathrm{o},1}=s((x_{\mathrm{o},1},z_{\mathrm{o},1}),p)(0)=s((x_{\mathrm{o},2},z_{\mathrm{o},2}),p)(0)=z_{\mathrm{o},2}$
Hence, $(x_{\mathrm{o},1},z_{\mathrm{o},1}) \ne (x_{\mathrm{o},2},z_{\mathrm{o},2})$ implies $x_{\mathrm{o},1} \ne x_{\mathrm{o},2}$.
Notice that $\zeta((x_{\mathrm{o},i},z_{\mathrm{o},i}),p)(t)=\begin{bmatrix} (\Pi_{s=0}^{t-1} A(p(s)) x_{\mathrm{o},i})^{\top} & (C(p(t-1))\Pi_{s=0}^{t-2} A(p(s)) x_{\mathrm{o},i})^{\top} \end{bmatrix}^{\top}$, $i=1,2$, and as $A(p(s))$ is invertable for all $s \in [0,t-1]$, 
it follows that 
$\zeta((x_{\mathrm{o},1},z_{\mathrm{o},1}),p)(t) \ne \zeta((x_{\mathrm{o},1},z_{\mathrm{o},1}),p)(t)$.
Finally, the 
observability of \eqref{eq:bilin:aux} follows from that of $\Sigma$.

 
 \textcolor{black}{Then  for any two initials states 
 of \eqref{eq:bilin:aux}, if the outputs from those initial states are the same on $[0,t_{\mathrm o}]$ for the input $p_{\mathrm o}$, then the outputs are the same for any input  $p \in \mathcal{P}$ and any time interval.
 Since $\xi s((x_{\mathrm o},z_{\mathrm o})),p)=\mathfrak{Y}_{\Sigma,x_{i,\mathrm o}}(0,p)$ 
 for any $(x_{\mathrm o}^{\top},z^{\top}_{\mathrm o})^{\top} \in \textcolor{black}{\X \times \mathbb{R}^{\NY}}$ and
 $p \in \mathcal{P}$,}
 it follows that $p_{\mathrm o}$ and $t_{\mathrm o}$ satisfy the statement of the theorem. 
\end{proof}

\textcolor{black}{We will also need the following corollary} of Theorem \ref{min:compare:col1}. 
 \begin{corollary}
\label{univ:input}
 Assume that $\Sigma$, $\hat{\Sigma}$ satisfy RC.
	 Then there exists a scheduling signal $p_{\mathrm o} \in \mathcal{P}$ and a time instance \textcolor{black}{$t_{\mathrm o} \in \mathbb{T}$} such that
	 for any two initial states $x_{\mathrm o}$, $\hat{x}_{\mathrm o}$ of $\Sigma$ and $\hat{\Sigma}$ respectively, if
	 $\mathfrak{Y}_{\Sigma,x_{\mathrm o}}(0,p_{\mathrm o})|_{[0,t_{\mathrm o}]}=\mathfrak{Y}_{\hat{\Sigma},\hat{x}_{\mathrm o}}(0,p_{\mathrm o})|_{[0,t_{\mathrm o}]}$, 
then 
	    $\left(\forall p \in \mathcal{P}: \mathfrak{Y}_{\Sigma,x_{\mathrm o}}(0,p)=\mathfrak{Y}_{\hat{\Sigma},\hat{x}_{\mathrm o}}(0,p) \right)$.
\end{corollary}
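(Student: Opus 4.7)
The plan is to reduce the corollary to Theorem~\ref{min:compare:col1} applied to a single auxiliary LPV-SSA that records the \emph{difference} of outputs of $\Sigma$ and $\hat{\Sigma}$. I would first form the LPV-SSA $\tilde{\Sigma}$ over the same scheduling set $\mathbb{P}$, with state dimension $n_{\mathrm x}+\hat{n}_{\mathrm x}$ and matrices
\[
\tilde{A}_i=\begin{bmatrix} A_i & 0 \\ 0 & \hat{A}_i \end{bmatrix},\quad
\tilde{B}_i=\begin{bmatrix} B_i \\ \hat{B}_i \end{bmatrix},\quad
\tilde{C}_i=\begin{bmatrix} C_i & -\hat{C}_i \end{bmatrix},\quad
\tilde{D}_i=D_i-\hat{D}_i,
\]
so that by linearity $\mathfrak{Y}_{\tilde{\Sigma},(x_{\mathrm o}^\top,\hat{x}_{\mathrm o}^\top)^\top}(u,p)=\mathfrak{Y}_{\Sigma,x_{\mathrm o}}(u,p)-\mathfrak{Y}_{\hat{\Sigma},\hat{x}_{\mathrm o}}(u,p)$ for every $(u,p)\in\mathcal{U}\times\mathcal{P}$. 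The scheduling space $\mathbb{P}$ is unchanged, and in DT the block-diagonal matrix $\tilde{A}(\mathbf{p})$ is invertible whenever $A(\mathbf{p})$ and $\hat{A}(\mathbf{p})$ both are; hence $\tilde{\Sigma}$ satisfies RC.

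In general $\tilde{\Sigma}$ will fail to be observable, so next I would apply Procedure~\ref{LSSobs} to produce its observable reduction $\tilde{\Sigma}^{\mathrm O}$. By Remark~\ref{LSSobs:rem}, $\tilde{\Sigma}^{\mathrm O}$ is observable, still satisfies RC, and there is a linear projection $\Pi$ (formed from the top block rows of the observability-reduction transform) such that $\mathfrak{Y}_{\tilde{\Sigma},\bar{x}}=\mathfrak{Y}_{\tilde{\Sigma}^{\mathrm O},\Pi\bar{x}}$ for every initial state $\bar{x}$ of $\tilde{\Sigma}$. Theorem~\ref{min:compare:col1} applied to $\tilde{\Sigma}^{\mathrm O}$ then produces a scheduling signal $p_{\mathrm o}\in\mathcal{P}$ (analytic in CT) and a time $t_{\mathrm o}\in\mathbb{T}$ with the universal-distinguishing property for pairs of initial states of $\tilde{\Sigma}^{\mathrm O}$.

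To conclude, fix $x_{\mathrm o},\hat{x}_{\mathrm o}$ satisfying the hypothesis of the corollary for this $p_{\mathrm o}$ and $t_{\mathrm o}$ and set $\bar{x}_{\mathrm o}=(x_{\mathrm o}^\top,\hat{x}_{\mathrm o}^\top)^\top$. The hypothesis rewrites as $\mathfrak{Y}_{\tilde{\Sigma},\bar{x}_{\mathrm o}}(0,p_{\mathrm o})|_{[0,t_{\mathrm o}]}=0=\mathfrak{Y}_{\tilde{\Sigma},0}(0,p_{\mathrm o})|_{[0,t_{\mathrm o}]}$, and projecting through $\Pi$ gives the same equality for $\tilde{\Sigma}^{\mathrm O}$ at the initial states $\Pi\bar{x}_{\mathrm o}$ and $0$. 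Theorem~\ref{min:compare:col1} then forces $\mathfrak{Y}_{\tilde{\Sigma}^{\mathrm O},\Pi\bar{x}_{\mathrm o}}=\mathfrak{Y}_{\tilde{\Sigma}^{\mathrm O},0}$; lifting back through the identity of Remark~\ref{LSSobs:rem} yields $\mathfrak{Y}_{\tilde{\Sigma},\bar{x}_{\mathrm o}}=\mathfrak{Y}_{\tilde{\Sigma},0}$, which by the linearity identity above is exactly $\mathfrak{Y}_{\Sigma,x_{\mathrm o}}(0,p)=\mathfrak{Y}_{\hat{\Sigma},\hat{x}_{\mathrm o}}(0,p)$ for every $p\in\mathcal{P}$. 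The only real subtlety is the observability-reduction step: without it, Theorem~\ref{min:compare:col1} could not be invoked on $\tilde{\Sigma}$ directly, and the RC-preservation in Remark~\ref{LSSobs:rem} is exactly what guarantees that the hypotheses of Theorem~\ref{min:compare:col1} transfer to $\tilde{\Sigma}^{\mathrm O}$.
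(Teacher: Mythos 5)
Your proposal is correct and follows essentially the same route as the paper: it forms the same block-diagonal error system (the paper's $\Sigma_c$), applies Procedure~\ref{LSSobs} to obtain an observable RC-preserving reduction, and then invokes Theorem~\ref{min:compare:col1} on the reduced system exactly as the paper does. The only cosmetic difference is that you make the necessity of the observability-reduction step explicit, which the paper leaves implicit.
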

 \begin{proof}[Proof of \textcolor{black}{Corollary} \ref{univ:input}]
	Let $\Sigma$ be as in  \eqref{equ:alpvss} 
	and $\hat{\Sigma}=(\mathbb{P}, \{\hat{A}_i,\hat{B}_i,\hat{C}_i,\hat{D}_i\}_{i=0}^{\mathrm{n}_p})$.
	Consider the   LPV-SSA 
	 $$\Sigma_c=(\mathbb{P}, \{\begin{bmatrix} A_i & 0 \\ 0 & \hat{A}_i \end{bmatrix}, \begin{bmatrix} B_i \\ \hat{B}_i \end{bmatrix}, \begin{bmatrix} C_i & -\hat{C}_i \end{bmatrix}, D_i-\hat{D}_i \}_{i=0}^{\mathrm{n}_p
  }). $$
    
	Then $\Sigma_c$ satisfies RC, and \textcolor{black}{the i/o functions of
    $\Sigma_c$ are the difference between the i/o functions
    of $\Sigma$ and those of $\hat{\Sigma}$}. That is,
	for $x_{\mathrm o,c}=\begin{bmatrix} x^{\top}_{\mathrm o} & \hat{x}^{\top}_{\mathrm o} \end{bmatrix}^{\top}$,  
	$\mathfrak{Y}_{\Sigma_c,x_{\mathrm o,c}}=\mathfrak{Y}_{\Sigma,x_{\mathrm o}}-\mathfrak{Y}_{\hat{\Sigma},\hat{x}_{\mathrm o}}$.
	Let $\Sigma_{c,\mathrm O}$ be the result of applying 
	Procedure \ref{LSSobs} to $\Sigma_c$. 
	By Remark \ref{LSSobs:rem}, 
	$\mathfrak{Y}_{\Sigma_c,x_{\mathrm o,c}}=\mathfrak{Y}_{\Sigma_{c,\mathrm O},\Pi x_{\mathrm O,c}}$ 
	and $\Sigma_{c,\mathrm O}$ satisfies RC. 
 \begin{color}{black}
        Consider
	the scheduling signal $p_\mathrm{o} \in \mathcal{P}$ and time instance $t_{\mathrm o}$ from 
    Let us apply
	Theorem \ref{min:compare:col1} to $\Sigma_{c,\mathrm O}$
    and let us
	   We claim that  $p_{\mathrm o}$, $t_{\mathrm o}$  satisfy the conclusion of Theorem \ref{univ:input}. 
	Indeed,  assume that $\mathfrak{Y}_{\Sigma,x_{\mathrm o}}(0,p_{\mathrm o})|_{[0,t_{\mathrm o}]}=\mathfrak{Y}_{\hat{\Sigma},\hat{x}_{\mathrm o}}(0,p_{\mathrm o})|_{[0,t_{\mathrm o}]}$ holds. 
    Then the output $\mathfrak{Y}_{\Sigma_{c},x_{\mathrm o,c}}(0,p_{\mathrm o})$ of the  error system $\Sigma_c$ is zero on  $[0,t_{\mathrm o}]$, where
    $x_{\mathrm o,c}=\begin{bmatrix} 
    x^{\top}_{\mathrm o} & \hat{x}^{\top}_{\mathrm o} \end{bmatrix}^{\top}$.
    Hence,  $\mathfrak{Y}_{\Sigma_{c,\mathrm O},\Pi x_{\mathrm o,c}}(0,p_{\mathrm o})|_{[0,t_{\mathrm o}]}=0=\mathfrak{Y}_{\Sigma_{c,\mathrm O},0}(0,p_{\mathrm o})|_{[0,t_{\mathrm o}]}$. 
    From Theorem \ref{min:compare:col1} it  then follows that
    $\mathfrak{Y}_{\Sigma_{c,\mathrm O},\Pi x_{\mathrm o,c}}=\mathfrak{Y}_{\Sigma_{c,\mathrm O},0}$.
	Hence,  
	$\mathfrak{Y}_{\Sigma,x_{\mathrm o}}(0,p)-\mathfrak{Y}_{\Sigma,\hat{x}_{\mathrm o}}(0,p)=\mathfrak{Y}_{\Sigma_{c},x_{\mathrm o,c}}(0,p)=\mathfrak{Y}_{\Sigma_{c,\mathrm O},\Pi x_{\mathrm o,c}}(0,p)=\mathfrak{Y}_{\Sigma_{c,\mathrm O},0}(0,p)=0$ for any
 $p \in \mathcal{P}$.
 \end{color}
\end{proof}
\subsection{Behaviors vs. i/o functions: proof of  Theorem \ref{col:behavior}}
	The implication $\mathbb{F}(\Sigma)=\mathbb{F}(\hat{\Sigma}) \implies  \mathcal{B}(\hat{\Sigma})=\mathcal{B}(\Sigma)$ is obvious.
	 We show the reverse implication. Assume that $\mathcal{B}(\hat{\Sigma})=\mathcal{B}(\Sigma)$. 
	 Consider the scheduling signal $p_{\mathrm o}$ and the time instance $t_{\mathrm o} > 0$ from \textcolor{black}{Corollary} \ref{univ:input}.

	 First, we show that $\mathfrak{Y}_{\Sigma,0}=\mathfrak{Y}_{\hat{\Sigma},0}$.
	 Consider any $p \in \mathcal{P}$, $u \in \mathcal{U}$ and  $\mathbb{T} \ni \tau > 0$ and define
	 $\tilde{u}$ and $\tilde{p}$ such that 
  \textcolor{black}{for all $s \in \mathbb{T}$,
  $\tilde{u}(s+t_{\mathrm o}+\tau)=u(s)$, $\tilde{p}(s+t_{\mathrm o}+\tau)=p(s)$, and }
	 \textcolor{black}{$\tilde{u}|_{[0,t_{\mathrm o}+\tau)}=0$} and $\tilde{p}|_{[0,t_{\mathrm o}]}=p_{\mathrm o}$. 
 \begin{color}{black}
	 Let $\tilde{y}=\mathfrak{Y}_{\Sigma,0}(\tilde{u},\tilde{p})$ be the output of $\Sigma$ generated by the zero initial state for input $\tilde{u}$ and  scheduling signal $\tilde{p}$. Since $(\tilde{y},\tilde{u}, \tilde{p})$ 
    belongs to
  $\mathcal{B}(\hat{\Sigma})=\mathcal{B}(\Sigma)$,  
  there exists an initial state $\hat{x}_{\mathrm o}$ 
  such that
	 $\tilde{y}=\mathfrak{Y}_{\hat{\Sigma},\hat{x}_{\mathrm o}}(\tilde{u},\tilde{p})$.
  Since  
  $\tilde{u}$ equals zero on $[0,t_{\mathrm o}]$, and $\hat{y}$ is the output of $\Sigma$ generated from the zero initial state, 
  $\hat{y}$ must be zero on $[0,t_{\mathrm o}]$.
  Since $\hat{p}$  equals $p_{\mathrm o}$ on $[0,t_{\mathrm o}]$,
%
 $\mathfrak{Y}_{\hat{\Sigma},\hat{x}_{\mathrm o}}(0,p_{\mathrm o})|_{[0,t_{\mathrm o}]}=\hat{y}|_{[0,t_{\mathrm o}]}=0$. 
  \textcolor{black}{From Corollary} \ref{univ:input},
  \mbox{it follows that} $\mathfrak{Y}_{\hat{\Sigma},\hat{x}_{\mathrm o}}(0,\bar{p})=\mathfrak{Y}_{\Sigma,0}(0,\bar{p})=0$ for all
  $\bar{p} \in \mathcal{P}$.
  Hence,  $\mathfrak{Y}_{\hat{\Sigma},\hat{x}_{\mathrm o}}(\bar{u},\bar{p})=\mathfrak{Y}_{\hat{\Sigma},\hat{x}_{\mathrm o}}(0,\bar{p})+\mathfrak{Y}_{\hat{\Sigma},0}(\bar{u},\bar{p})=\mathfrak{Y}_{\hat{\Sigma},0}(\bar{u},\bar{p})$ for all $\bar{u} \in \mathcal{U}$.
  
	 In particular,  $\mathfrak{Y}_{\hat{\Sigma},0}(\tilde{u},\tilde{p})=\mathfrak{Y}_{\Sigma,0}(\tilde{u},\tilde{p})=\hat{y}$.
  Let $x_{f},\hat{x}_f$ be the states of $\Sigma$ respectively $\hat{\Sigma}$
  reached from the zero initial state at time $t_{\mathrm o}+\tau$ under
  input $\tilde{u}$ and scheduling signal $\tilde{p}$. 
  Since $\tilde{u}(s+\tau+t_{\mathrm o})=u(s)$, $\tilde{p}=(s+\tau+t_{\mathrm o})=p(s)$, it follows that $\mathfrak{Y}_{\Sigma,x_f}(u,p)(s)=\hat{y}(s+t_{\mathrm o}+\tau)=\mathfrak{Y}_{\hat{\Sigma},\hat{x}_f}(u,p)(s)$ for all
  $s \in \mathbb{T}$.
  However,  the restriction of $\tilde{u}$ to $[0,t_{\mathrm o}+\tau)$ is zero, and hence $x_f$ and $\hat{x}_f$ are zero.
  Hence, $\mathfrak{Y}_{\hat{\Sigma},0}(u,p)=\mathfrak{Y}_{\Sigma,0}(u,p)$, and as  
	 $u$ and $p$ are arbitrary, 
  $\mathfrak{Y}_{\hat{\Sigma},0}=\mathfrak{Y}_{\Sigma,0}$ follows.
\end{color}

Next we show that $\mathbb{F}(\Sigma) \subseteq \mathbb{F}(\hat{\Sigma})$.
	 To this end, let  $x_{\mathrm o}$ be an initial state of $\Sigma$. 
	  If $y=\mathfrak{Y}_{\Sigma,x_{\mathrm o}}(0,p_{\mathrm o})$,
	 then $(y,0,p_{\mathrm o}) \in \mathcal{B}(\Sigma)=\mathcal{B}(\hat{\Sigma})$, and thus there exists an initial state $\hat{x}_{\mathrm o}$ of $\Sigma$ such that
	 $y=\mathfrak{Y}_{\hat{\Sigma},\hat{x}_{\mathrm o}}(0,p_{\mathrm o})$. From \textcolor{black}{Corollary} \ref{univ:input} it follows that 
	 $\mathfrak{Y}_{\Sigma,x_{\mathrm o}}(0,p)=\mathfrak{Y}_{\hat{\Sigma},\hat{x}_{\mathrm o}}(0,p)$ for all $p \in \mathcal{P}$. 
	  Since  
	 $\mathfrak{Y}_{\Sigma,x_{\mathrm o}}(u,p)=\mathfrak{Y}_{\Sigma,x_{\mathrm o}}(0,p)+\mathfrak{Y}_{\Sigma,0}(u,p)$, 
	 $\mathfrak{Y}_{\hat{\Sigma},\hat{x}_{\mathrm o}}(u,p)=\mathfrak{Y}_{\hat{\Sigma},x_{\mathrm o}}(0,p)+\mathfrak{Y}_{\hat{\Sigma},0}(u,p)$, this implies 
	 $\mathfrak{Y}_{\Sigma,x_{\mathrm o}}=\mathfrak{Y}_{\hat{\Sigma},\hat{x}_{\mathrm o}}$. The inclusion  $\mathbb{F}(\hat{\Sigma}) \subseteq \mathbb{F}(\Sigma)$ can be shown similarly.
\subsection{Minimality results: proof of Theorem \ref{the:behavior_min1}--\ref{the:behavior_min2}}
\begin{proof}[Proof of Theorem \ref{the:behavior_min1}]
 First, we show that if $\Sigma$ is a minimal realization of $\mathcal{B}$, then it is observable. Assume that $\Sigma$
	is not observable. Let us apply Procedure \ref{LSSobs} to $\Sigma$. \textcolor{black}{Then} the resulting LPV-SSA $\Sigma_{\mathrm O}$
	has a smaller state-space dimension than $\Sigma$. \textcolor{black}{Remark \ref{LSSobs:rem}} $\Sigma_{\mathrm O}$ is \textcolor{black}{also realization of $\mathcal{B}$.} This contradicts the  minimality of $\Sigma$.

        \textcolor{black}{Next, we prove that observability implies minimality.}
        Consider two LPV-SSA realizations $\Sigma$ and $\hat{\Sigma}$ of $\mathcal{B}$, such that $\Sigma$ and $\hat{\Sigma}$ 
        both satisfy RC. Then  by Theorem \ref{col:behavior}, $\Phi=\mathbb{F}(\Sigma)=\mathbb{F}(\hat{\Sigma})$.
	\textcolor{black}{Define $\NX=\dim(\Sigma)$ and $\hat{n_{\mathrm x}} = \dim(\hat{\Sigma})$.}
   \textcolor{black}{Let} $\SWS(\Sigma)$ and $\SWS(\hat{\Sigma})$
	associated with $\Sigma$ and $\hat{\Sigma}$ respectively, \textcolor{black}{as defined in} \cite[Appendix, Subsection B]{LPVSSAkalman}. 
 Recall from \cite[Definition 2]{LPVSSAkalman} the notion of
	a switched i/o function $\SWS(\mathfrak{F})$ associated with an i/o function $\mathfrak{F} \in \Phi$.
	and recall that the mapping $\mathfrak{F} \mapsto \SWS(\mathfrak{F})$ is injective. 
	 Define $\SWS(\Phi)=\{ \SWS(\mathfrak{F} \mid \mathfrak{F} \in \Phi\}$ and let 
	$\mu:\SWS(\Phi) \rightarrow \textcolor{black}{\mathbb{R}^{\NX}}$ and $\hat{\mu}:\SWS(\Phi) \rightarrow \textcolor{black}{\mathbb{R}^{\hat{\NX}}}$ be such that
	for any $\mathfrak{F} \in \Phi$, 
	the i/o functions of $\Sigma$ and $\hat{\Sigma}$ \textcolor{black}{induced} by the initial state
	$\mu(\SWS(\mathfrak{F}))$ and  $\hat{\mu}(\SWS(\mathfrak{F}))$ respectively are both equal to $\mathfrak{F}$.
	 Then 
	 $(\SWS(\Sigma),\mu)$, $(\SWS(\hat{\Sigma}),\hat{\mu})$ are both realizations of $\SWS(\Phi)$ in the sense of 
	 \cite{PetCocv11}\footnote{\textcolor{black}{The results of 
  \cite{PetCocv11} can readily be extended to DT, 
      by using the relationship from \cite{Pet12} between linear switched systems in DT and rational representations. }}
      Assume that $\Sigma$ is observable. Then by \cite[Theorem 4]{LPVSSAkalman}, $\SWS(\Sigma)$ is observable. Moreover,
	 $\mu$ is surjective. Indeed, for any initial state $x_{\mathrm o}$, the i/o function generated by $\mu(\SWS(\mathfrak{Y}_{\Sigma,x_{\mathrm o}})$
	 and $x_{\mathrm o}$ are the same, and hence by observability of $\Sigma$, $x_{\mathrm o}=\mu(\SWS(\mathfrak{Y}_{\Sigma,x_{\mathrm o}})$. 
	 Then  $(\SWS(\Sigma),\mu)$ is span-reachable in the terminology of \cite{PetCocv11}, and  by \cite{PetCocv11}, $(\SWS(\Sigma),\mu)$
	 is a minimal realization of $\SWS(\Phi)$. \textcolor{black}{Hence, $\dim \SWS(\Sigma)=\dim \Sigma \le \dim \SWS(\hat{\Sigma})=\dim \hat{\Sigma}$, i.e.
  $\Sigma$ is a minimal realization of $\mathcal{B}$.}

	 Assume that $\Sigma$ and $\hat{\Sigma}$ are minimal realizations of $\mathcal{B}$. By the first part of the theorem,
     they are observable, and hence the linear switched systems $\SWS(\Sigma)$ and $\SWS(\hat{\Sigma})$ are observable.
	 \textcolor{black}{From the} argument of the previous paragraph 
	 it follows 
  \textcolor{black}{that} $(\SWS(\hat{\Sigma}),\hat{\mu})$ and  $(\SWS(\Sigma),\mu)$ are minimal realizations
	 of $\SWS(\Phi)$. Hence, by \cite{PetCocv11,Pet12}, they are isomorphic, and by \cite[Theorem 4]{LPVSSAkalman}, $\Sigma$ and $\hat{\Sigma}$
	 are isomorphic too.
\end{proof}
\textcolor{black}{
In order to prove Theorem  \ref{the:behavior_min2}, we will need to relate controllability and observability of LPV-SSAs with 
with that of the linear-time varying state-space representation  (abbreviated as \emph{LTV-SS}) obtained from the LPV-SSA by fixing a
a particular scheduling signal $p$. The latter LTV-SS
is defined as }
\begin{equation*}
 \Sigma(p) \left \{
  \begin{split}	 
	  &\xi x(t)=A(t)x(t)+B(t)u(t) \\
	  & y(t)=C(t)x(t)+D(t)
   \end{split}\right.
 \end{equation*}
 where $A(t)=A(p(t))$, $B(t)=B(p(t))$, $C(t)=C(p(t))$, $D(t)=D(p(t))$.
 \textcolor{black}{Note that the LTV-SS $\Sigma(p)$ 
 depends on the scheduling signal $p$.
 Recall for \cite{Callier91} the notion of observability and controllability of LTV-SS on a given time-interval. } 
 Then Theorem \ref{min:compare:col1} implies the following.
\begin{corollary}
\label{min:compare:col11}
	With the assumptions and notations of Theorem \ref{min:compare:col1}, 
	the LTV-SS $\Sigma(p_{\mathrm o})$ is 
    \textcolor{black}{observable} on $[0,t_\mathrm{o}]$. 
 \end{corollary}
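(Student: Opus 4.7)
The plan is to deduce observability of $\Sigma(p_{\mathrm o})$ on $[0,t_{\mathrm o}]$ essentially by unwinding the definitions and appealing to Theorem \ref{min:compare:col1} and the assumed observability of $\Sigma$.

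First, I would make the key identification: for any initial state $x_{\mathrm o} \in \mathbb{R}^{\NX}$, the output trajectory of the LTV-SS $\Sigma(p_{\mathrm o})$ produced by the zero input and initial state $x_{\mathrm o}$ coincides with $\mathfrak{Y}_{\Sigma,x_{\mathrm o}}(0,p_{\mathrm o})$. This is immediate from the definition of $\Sigma(p_{\mathrm o})$, since once we fix the scheduling signal to $p_{\mathrm o}$, the defining equations of $\Sigma$ reduce exactly to those of $\Sigma(p_{\mathrm o})$, and the solution concepts in CT/DT match.

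Next, to establish observability of $\Sigma(p_{\mathrm o})$ on $[0,t_{\mathrm o}]$ in the sense of \cite{Callier91}, I take two initial states $x_1,x_2 \in \mathbb{R}^{\NX}$ and assume that they produce identical zero-input responses of $\Sigma(p_{\mathrm o})$ on $[0,t_{\mathrm o}]$. By the identification above, this is exactly the hypothesis $\mathfrak{Y}_{\Sigma,x_1}(0,p_{\mathrm o})|_{[0,t_{\mathrm o}]} = \mathfrak{Y}_{\Sigma,x_2}(0,p_{\mathrm o})|_{[0,t_{\mathrm o}]}$ of Theorem \ref{min:compare:col1}. Applying that theorem yields $\mathfrak{Y}_{\Sigma,x_1} = \mathfrak{Y}_{\Sigma,x_2}$, and observability of $\Sigma$ (Definition \ref{def:reachobs}) then forces $x_1 = x_2$.

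Since two initial states of $\Sigma(p_{\mathrm o})$ with identical zero-input outputs on $[0,t_{\mathrm o}]$ must coincide, $\Sigma(p_{\mathrm o})$ is observable on $[0,t_{\mathrm o}]$. I do not anticipate any real obstacle here; the argument is a one-line consequence of Theorem \ref{min:compare:col1} together with observability of $\Sigma$. The only mild subtlety is ensuring that the LTV-SS observability notion of \cite{Callier91} on the interval $[0,t_{\mathrm o}]$ is precisely the statement that distinct initial states produce distinct zero-input outputs on that interval, which is the standard definition in both CT and DT.
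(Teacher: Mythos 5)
Your proposal is correct and follows exactly the route the paper intends: the paper offers no separate proof of Corollary~\ref{min:compare:col11}, merely asserting that it is implied by Theorem~\ref{min:compare:col1}, and your argument spells out that implication — identify the zero-input response of $\Sigma(p_{\mathrm o})$ from $x_{\mathrm o}$ with $\mathfrak{Y}_{\Sigma,x_{\mathrm o}}(0,p_{\mathrm o})$, apply the theorem, and invoke the observability of $\Sigma$ (which is among the theorem's hypotheses) to conclude $x_1=x_2$. No gaps.
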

\begin{corollary}
\label{min:compare:col2}
If $\Sigma$ is span-reachable from $x_\mathrm{o}=0$ and  $\Sigma$ satisfies RC, 
 then there exists 
 $p_\mathrm{r} \in \mathcal{P}$ and 
 \textcolor{black}{$t_\mathrm{r} \in \mathbb{T}$}, such that
	the LTV-SS $\Sigma(p_{\mathrm r})$ is \textcolor{black}{controllable} on $[0,t_\mathrm{r}]$. 
\end{corollary}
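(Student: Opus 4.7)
The plan is to derive Corollary \ref{min:compare:col2} from Corollary \ref{min:compare:col11} by invoking the classical duality between reachability and observability of linear time-varying systems. Specifically, I would apply the already-established observability version of the result to a suitably defined \emph{dual} LPV-SSA of $\Sigma$, and then translate observability of the resulting dual LTV-SS back to controllability of the original LTV-SS.

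Concretely, I would define the dual $\Sigma^{d}=(\mathbb{P},\{A_i^\top,C_i^\top,B_i^\top,D_i^\top\}_{i=0}^{n_\mathrm{p}})$ by transposing each state matrix and swapping the roles of input and output. Then $\Sigma^{d}$ again satisfies RC: in CT this depends only on $\mathbb{P}$, and in DT one notes that $A(\mathbf{p})^\top$ is invertible if and only if $A(\mathbf{p})$ is. Next, I would verify that span-reachability of $\Sigma$ from the origin is equivalent to observability of $\Sigma^{d}$. This follows directly from the rank characterizations in \cite{LPVSSAkalman}: the extended $n$-step reachability matrix of $\Sigma$ is the transpose of the extended $n$-step observability matrix $\mathcal{O}_n$ built from $\Sigma^{d}$, so one has full row rank precisely when the other has full column rank; in particular, $\Sigma^{d}$ is observable.

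Having established this, Corollary \ref{min:compare:col11} applied to the observable LPV-SSA $\Sigma^{d}$ produces a scheduling signal $p_\mathrm{r}\in\mathcal{P}$ and a time $t_\mathrm{r}\in\mathbb{T}$ such that the LTV-SS $\Sigma^{d}(p_\mathrm{r})$ is observable on $[0,t_\mathrm{r}]$. Standard LTV duality (see \cite{Callier91}) then yields that observability of $\Sigma^{d}(p_\mathrm{r})$ on $[0,t_\mathrm{r}]$ is equivalent to controllability of the original LTV-SS $\Sigma(p_\mathrm{r})$ on the same interval: the observability Gramian of $\Sigma^{d}(p_\mathrm{r})$ and the reachability Gramian of $\Sigma(p_\mathrm{r})$ are related through transposition of the state-transition matrix, so non-singularity of one forces non-singularity of the other. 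This gives exactly the conclusion of the corollary with the very same $p_\mathrm{r}$ and $t_\mathrm{r}$.

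The main obstacle I anticipate is handling the DT case cleanly. The LTV duality between observability and controllability Gramians is most transparently phrased in CT, whereas in DT the adjoint system must be defined through the (now invertible) one-step matrices $A(p(t))$, and one has to keep the time-reversal bookkeeping straight so that the observability Gramian of $\Sigma^{d}(p_\mathrm{r})$ really coincides, up to an invertible change of variables, with the reachability Gramian of $\Sigma(p_\mathrm{r})$. This is precisely where the RC assumption pays off: invertibility of $A(\mathbf{p})$ on $\mathbb{P}$ makes the adjoint recursion well defined, and the rest of the argument reduces to a routine verification that the duality at the LPV level (established by transposition of $\mathcal{O}_n$) is compatible with the duality at the LTV level along the fixed trajectory $p_\mathrm{r}$.
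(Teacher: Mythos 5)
Your proposal is correct and follows essentially the same route as the paper: the paper likewise forms the dual LPV-SSA $\Sigma^{T}=(\mathbb{P},\{A_i^{\top},C_i^{\top},B_i^{\top},D_i^{\top}\}_{i=0}^{n_{\mathrm p}})$, identifies the extended observability matrix of $\Sigma^{T}$ with the transpose of the extended reachability matrix of $\Sigma$ to conclude observability of the dual, applies Corollary \ref{min:compare:col11} to obtain a scheduling signal along which the dual LTV-SS is observable, and then invokes LTV duality to get controllability of $\Sigma(p_{\mathrm o})$ on the same interval. Your explicit check that the dual satisfies RC (needed to apply Corollary \ref{min:compare:col11}) is a point the paper leaves implicit, so nothing is missing.
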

\begin{proof}[Proof of Corollary \ref{min:compare:col2}]
Consider the dual LPV-SSA
 $\Sigma^{T}=(\mathbb{P},\{(A^T_q,C^T_q, B_q^T, D_q^T)\}_{q=0}^{n_p})$.
 If $\Sigma$ is span-reachable from zero, 
then $\rank \mathcal{R}_{\NX-1}=n_x$,
	where $\mathcal{R}_{\NX-1}$ is the ($\NX-1$)-step extended reachability matrix of $\Sigma$ from $0$ as defined in \cite{LPVSSAkalman}.
 Let $\mathcal{O}_{\NX-1}$ be the ($\NX-1$)-step extended observability matrix of
	$\Sigma^T$ as defined in \cite{LPVSSAkalman}. 
It is clear that $\mathcal{O}_{\NX-1}=\mathcal{R}_{\NX-1}^T$, and hence
 $\rank \mathcal{O}_{\NX-1}=\NX$ and thus $\Sigma^T$ is observable.
 From Corollary \ref{min:compare:col11} it follows that there exist
	$t_{\mathrm o} \in \mathbb{N}$, and $p_{\mathrm o} \in \mathcal{P}$ 
	 such that the LTV-SS $\Sigma^T(p_{\mathrm o})$ is completely observable on $[0,t_{\mathrm o}]$. 
 This LTV system is given
by matrices $A(t)=A^T(p_{\mathrm o}(t))$, $B(t)=C^T(p_{\mathrm o}(t))$, $C(t)=B^T(p_{\mathrm o}(t))$
	The dual of $\Sigma^T(p_{\mathrm o})$ is then completely controllable on $[0,t_{\mathrm o}]$, and it coincides with $\Sigma(p_{\mathrm o})$.
Hence, we can choose $p_{\mathrm e}=p_{\mathrm o}$ and $t_{\mathrm e}=t_{\mathrm o}$.
\end{proof}

\begin{proof}[Proof of Theorem \ref{the:behavior_min2}]
  If $\Sigma$ is observable and span-reachable from zero, then it is a minimal by 
  Theorem \ref{the:behavior_min2}. Conversely, assume that $\Sigma$ is a minimal realization of $\mathcal{B}$. By Theorem \ref{the:behavior_min2} is 
	$\Sigma$ is observable. Le $x_{\mathrm o}$ be an initial state of $\Sigma$. 
	Let $p_{\mathrm o}$, $t_o$ be the scheduling signal and
	time instance from Theorem \ref{min:compare:col1}. 
	\textcolor{black}{Let $y=\mathfrak{Y}_{\Sigma,x_{\mathrm o}}(0,p_{\mathrm o})$.}
 Then $(y,0,p_{\mathrm o}) \in \mathcal{B}$. 
	Note that $(0,0,p_{\mathrm o}) \in \mathcal{B}$. From
	controllability of $\mathcal{B}$ it  follows that there exists a time instance $\tau > 0$, and signals $\tilde{u},\tilde{p},\tilde{y}$ such that
	$(\tilde{y},\tilde{u},\tilde{p}) \in \mathcal{B}$, $(\tilde{y}|_{[0,t_{\mathrm o}]},\tilde{u}(t)|_{[0,t_{\mathrm o}]},\tilde{p}|_{[0,t_{\mathrm o}]})=(0,0,p_{\mathrm o}|_{[0,t_{\mathrm o}]})$, and \textcolor{black}{$(\tilde{y}(s+t_{\mathrm o}+\tau),\tilde{u}(s+t_{\mathrm o}+\tau), \tilde{p}(s+t_{\mathrm o}+\tau))=(y(s),0,p_{\mathrm o}(s))$} for all 
 $s \in \mathbb{T}$. 
	Since $\Sigma$ is a realization of $\mathcal{B}$, there exists an initial state
	$\hat{x}_{\mathrm o}$ of $\Sigma$ such that $\mathfrak{Y}_{\Sigma,\hat{x}_{\mathrm o}}(\tilde{u},\tilde{p})=\tilde{y}$.
	It then follows that $\mathfrak{Y}_{\Sigma,\hat{x}_{\mathrm o}}(0,p_{\mathrm o})|_{[0,t_{\mathrm o}]}=0=\mathfrak{Y}_{\Sigma,0}(0,p_{\mathrm o})|_{[0,t_{\mathrm o}]}$, 
	and hence $\mathfrak{Y}_{\Sigma,0}=\mathfrak{Y}_{\Sigma,\hat{x}_{\mathrm o}}$.
	Since $\Sigma$ is observable, it then follows that  
	$\hat{x}_{\mathrm o}=0$. Consider now the state $\hat{x}_f$ 
   reached from the zero initial state
	using $\tilde{u}$,$\tilde{p}$ at time $t_{\mathrm o}+\tau$.
	It then follows that 
	\textcolor{black}{$\mathfrak{Y}_{\Sigma,\hat{x}_f}(0,p_{\mathrm o})(s)=\mathfrak{Y}_{\Sigma,0}(\tilde{u},\tilde{p})(s+t_{\mathrm o}+\tau)= \mathfrak{Y}_{\Sigma,x_{\mathrm o}}(0,p_{\mathrm o})(s)$, $s \in \mathbb{T}$}. Hence, by Theorem \ref{min:compare:col1}
	and observability of $\Sigma$, $x_{\mathrm o}=\hat{x}_f$, i.e., $x_{\mathrm o}$ is reachable from zero. 
	 That is, $\Sigma$ is span-reachable.

	 \textcolor{black}{Assume that $\Sigma$ a realization of $\mathcal{B}$}
  which is span-reachable from zero.
  Consider 
   \textcolor{black}{$p_c \in \mathcal{P}$ and $t_c \in \mathbb{T}$} from Corollary \ref{min:compare:col2}.
	 Consider two elements $(u_1,p_1,y_1)$ and $(u_2,p_2,y_2)$ of \textcolor{black}{$\mathcal{B}$} and a time instance $t$.
	 Then there exist initial states $x_{\mathrm o,1}, x_{\mathrm o,2}$ such that 
	 $y_i=\mathfrak{Y}_{\Sigma,x_{\mathrm o,i}}(u_i,p_i)$. \textcolor{black}{Let $x_{\mathrm o}$ be the state reached from $x_{\mathrm o,1}$ at time $t^{+}$ under input $u_1$
     and scheduling signal $p_1$,}
	 where $t^{+}=t$ in CT and $t^{+}=t+1$ in DT.
	 \textcolor{black}{Let} $u_c$ and $t_c > \tau \ge 0$ be such that $x_{\mathrm o,2}$ 
  \textcolor{black}{is the state reached from $x_{\mathrm o}$
  at time $\tau^{-}$ under input $u_c$ and scheduling signal $p_c$},
	 where $\tau^{-}=\tau$ in CT and $\tau^{-}=\tau-1$ in DT. 
	 Since $\Sigma(p_c)$ is \textcolor{black}{controllable} on $[0,t_{c}]$, such $u_c$ and $\tau$ exist. 
	 Define $\tilde{u}, \tilde{p}$ such that $\tilde{u}|_{[0,t]}=u_1|_{[0,t]}, \tilde{p}|_{[0,t]}=p_1|_{[0,t]}$,
	 $\tilde{u}(s)=u_c(s-t), \tilde{p}(s)=p_c(s-t)$, $s \in (t,t+\tau)$, and 
	 \textcolor{black}{$\tilde{u}(s+t+\tau)=u_2(s), \tilde{p}(s+t+\tau)=p_2(s)$, $s \in \mathbb{T}$}. 
	 Let $\tilde{y}=\mathfrak{Y}_{\Sigma,x_{\mathrm o,1}}(\tilde{u},\tilde{p})$.
	 Then $(\tilde{u},\tilde{p},\tilde{y}) \in \mathcal{B}$, 
	  $\tilde{u}(s),\tilde{p}(s),\tilde{y}(s))=(u_1(s),p_1(s),y_1(s))$ for $s \le t$, and 
	  $\tilde{u}(s),\tilde{p}(s),\tilde{y}(s))=(u_2(s-t-\tau),p_2(s-t-\tau),y_2(s-t-\tau))$ for $s \le t+\tau$. 
	 That is, $\mathcal{B}$ is controllable.
\end{proof}

\subsection{Relationship with prior work: proof of Theorem \ref{min:compare}}

\begin{proof}[Proof of Theorem \ref{min:compare}]
 We prove the statement  for observability, the 
 statement on span-reachability follows by duality. 
 From Corollary \ref{min:compare:col1} it follows that 
	the LTV-SS  $\Sigma(p_{\mathrm o})$ 
 is observable on $[0,t_{\mathrm o}]$.
 From
 \cite{Callier91} it follows that 
 there exists $k \ge 0$, such that the $k$ step observability
	matrix of $\Sigma(p_{\mathrm o})$ is full column  in DT, and 
	 it full column rank for almost all $t$ on $(0,t_{\mathrm o})$ in the CT case.
From \cite[Definition 3.34]{Toth2010SpringerBook} it then follows that $\Sigma$ is structurally observable.
	The last statement of the theorem follows from \cite[Theorem 3.14]{Toth2010SpringerBook}.
\end{proof}


\section{Conclusions}\label{para:concl} 
  In this paper a characterization of minimal LPV-SSA realizations of LPV behaviors in terms of observability has been presented. It
  has also been  shown that minimal LPV-SSA realizations of the same behavior are isomorphic.
  These results represent the first steps towards a behavioral approach for
  LPV-SSAs. Future work will be directed towards developing counterparts of i/o partitioning, and kernel representations for 
  manifest behaviors of
  LPV-SSAs. 

\bibliographystyle{plain}

\begin{thebibliography}{10}

\bibitem{Apkarian95TACT}
P.~Apkarian and P.~Gahinet.
\newblock A convex characterization of gain-scheduled $\mathcal{H}_{\infty }$
  controllers.
\newblock {\em IEEE Trans. on Automatic Control}, 40(5):853--864, 1995.

\bibitem{Giarre2002}
B.~Bamieh and L.~Giarr{\'e}.
\newblock Identification of linear parameter varying models.
\newblock {\em Int. Journal of Robust and Nonlinear Control}, 12:841--853,
  2002.

\bibitem{MertLPVCDC2015}
M.~Bastug, M;~Petreczky, R.~T\'oth, R.~Wisniewski, J.~Leth, and D.~Efimov.
\newblock Moment matching based model reduction for lpv state-space models.
\newblock In {\em IEEE Conference on Decision and Control}, 2015.

\bibitem{BelikovSCL2014}
Juri Belikov, Ulle Kotta, and Maris Tonso.
\newblock Comparison of {LPV} and nonlinear system theory: A realization
  problem.
\newblock {\em Systems \& Control Letters}, 64:72 -- 78, 2014.

\bibitem{Butcher08}
M.~Butcher, A.~Karimi, and R.~Longchamp.
\newblock On the consistency of certain identification methods for linear
  parameter varying systems.
\newblock In {\em IFAC World Congress}, 2008.

\bibitem{Callier91}
F.~M. Callier and C.~A. Desoer.
\newblock {\em Linear system theory}.
\newblock Springer-Verlag, 1991.

\bibitem{CoxThesis}
Pepijn~B Cox.
\newblock {\em Towards efficient identification of linear parameter-varying
  state-space models}.
\newblock PhD thesis, Eindhoven University of Technology, 2018.

\bibitem{Poolla2008}
K.~Hsu, T.~L. Vincent, and K.~Poolla.
\newblock Nonparametric methods for the identification of linear parameter
  varying systems.
\newblock In {\em Int. Symposium on Computer-Aided Control System Design},
  2008.

\bibitem{Isi:Nonlin}
Alberto Isidori.
\newblock {\em Nonlinear Control Systems}.
\newblock Springer Verlag, 1989.

\bibitem{Toth10_SRIV}
V.~Laurain, M.~Gilson, R.~T{\'o}th, and H.~Garnier.
\newblock Refined instrumental variable methods for identification of {LPV
  Box-Jenkins} models.
\newblock {\em Automatica}, 46(6):959--967, 2010.

\bibitem{Packard94CL}
A.~Packard.
\newblock Gain scheduling via linear fractional transformations.
\newblock {\em Systems \& Control Letters}, 22(2):79--92, 1994.

\bibitem{PetCocv11}
M.~Petreczky.
\newblock Realization theory for linear and bilinear switched systems: formal
  power series approach - part i: realization theory of linear switched
  systems.
\newblock {\em ESAIM Control, Optimization and Calculus of Variations},
  17:410--445, 2011.

\bibitem{Pet12}
M.~Petreczky, L.~Bako, and J.~{van Schuppen}.
\newblock Realization theory of discrete-time linear switched system.
\newblock {\em Automatica}, 49(11):3337--3344, 2013.

\bibitem{LPVSSAkalman}
M.~Petreczky, R.\ T{\'o}th, and G.~Merc{\`e}re.
\newblock Realization theory for {LPV} state-space representations with affine
  dependence.
\newblock {\em IEEE Transactions on Automatic Control}, 62:4667--4674, 2017.

\bibitem{PW91}
J.~Polderman and J.~Willems.
\newblock {\em Introduction to mathematical systems theory: a behavioral
  approach}.
\newblock Springer, 1991.

\bibitem{Rugh00}
W.~Rugh and J.~S. Shamma.
\newblock Research on gain scheduling.
\newblock {\em Automatica}, 36(10):1401--1425, 2000.

\bibitem{Scherer2009}
C.~W. Scherer.
\newblock Robust controller synthesis is convex for systems without control
  channel uncertainties.
\newblock In {\em Model-Based Control: Bridging Rigorous Theory and Advanced
  Technology}, pages 13--31. Springer, 2009.

\bibitem{Son:MathContr}
E.D. Sontag.
\newblock {\em Mathematical Control Theory}.
\newblock Spinger-Verlag, 1990.

\bibitem{SussmannUnivObs}
HéctorJ. Sussmann.
\newblock Single-input observability of continuous-time systems.
\newblock {\em Mathematical systems theory}, 12(1):371--393, 1978.

\bibitem{Sznaier:01}
M.~Sznaier and C.~Mazzaro.
\newblock An {LMI} approach to the identification and (in)validation of {LPV}
  systems.
\newblock In S.O.R. Moheimani, editor, {\em Perspectives in robust control},
  volume 268 of {\em Lecture Notes in Control and Information Sciences}, pages
  327--346. Springer, London, 2001.

\bibitem{Toth2010SpringerBook}
R.~T{\'o}th.
\newblock {\em Modeling and Identification of Linear Parameter-Varying
  Systems}.
\newblock Lecture Notes in Control and Information Sciences, Vol. 403.
  Springer, Heidelberg, 2010.

\bibitem{Toth11_LPVBehav}
R.~T{\'o}th, J.~C. Willems, P.~S.~C. Heuberger, and P.~M.~J. {Van den Hof}.
\newblock The behavioral approach to linear parameter-varying systems.
\newblock {\em IEEE Trans. on Automatic Control}, 56:2499--2514, 2011.

\bibitem{Wingerden09}
J.~W. {van Wingerden} and M.~Verhaegen.
\newblock Subspace identification of bilinear and {LPV} systems for open- and
  closed-loop data.
\newblock {\em Automatica}, 45(2):372--381, 2009.

\bibitem{Verdult02}
V.~Verdult and M.~Verhaegen.
\newblock Subspace identification of multivariable linear parameter-varying
  systems.
\newblock {\em Automatica}, 38(5):805--814, 2002.

\bibitem{RolandFundLemma}
C.~Verhoek, R.~T\'oth, S.~Haesaert, and A.~Koch.
\newblock Fundamental lemma for data-driven analysis of linear
  parameter-varying systems.
\newblock 2021.

\bibitem{WangIO}
Y.~Wang and E.D. Sontag.
\newblock Orders of input/output differential equations and state space
  dimensions.
\newblock {\em SIAM Journal on Control and Optimization}, 33:1102--1127, 1995.

\bibitem{WebsterBook}
R.~Webster.
\newblock {\em Convexity}.
\newblock Oxford, 1994.

\end{thebibliography}

\end{document}